    \newtheorem{thm}{Theorem}[section]
    \newtheorem{prop}[thm]{Proposition}
    \newtheorem{lem}[thm]{Lemma}
    \newtheorem{cor}[thm]{Corollary}
  \theoremstyle{definition}
    \newtheorem{defi}[thm]{Definition}
  \theoremstyle{remark}
    \newtheorem{rem}[thm]{Remark}
    \newtheorem{ex}[thm]{Example}
\title{Condensed Sets on Compact Hausdorff Spaces}
\author{Koji Yamazaki}
\begin{document}
\maketitle
\begin{abstract}
A condensed set is a sheaf on the site of Stone spaces and continuous maps.
We prove that condensed sets are equivalent to sheaves on the site of compact Hausdorff spaces and continuous maps.
As an application, we show that there exists a model structure on the category of condensed sets.
\end{abstract}
\setcounter{section}{-1}
	\section{Introduction}
A {\it condensed set}, proposed by Dustin Clausen to Peter Scholze in 2018 (cf. \cite{cele100UMI800UP}), is a sheaf on the site {\bf Stone}, where {\bf Stone} is the category of Stone spaces and continuous maps.
The condensed set provides a framework for dealing algebraically with the structure of topological rings/modules/groups (cf, \cite{scholze2019lectures}).
This is a requirement, for example, when analytic geometry is dealt with in algebraic geometry (cf. \cite{clausen2022condensed}). \\[10pt]
We denote the category of compact Hausdorff spaces and continuous maps by {\bf CH}.
{\bf CH} is also a site (cf. Section 1.2).
The site {\bf Stone} is ``cofinal'' in the site {\bf CH} (cf. Proposition \ref{PropStoneProjection}).
Then, the sheaves defined on each sites are equivalent.
The main theorem of this paper is as the following.
\setcounter{section}{2}
\setcounter{thm}{0}
\begin{thm}
For any condensed set $X$, there exists a sheaf $Y$ on the site {\bf CH} such that the restriction of $Y$ to {\bf Stone} is isomorphic to $X$.
Moreover, $Y$ is unique up to isomorphic.
\end{thm}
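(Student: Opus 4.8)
\emph{Set-up.} Recall (Section~1.2) that in both sites a covering is a finite jointly surjective family; since {\bf Stone} and {\bf CH} are closed under finite disjoint unions and the inclusion preserves them, verifying the sheaf axiom on either site reduces to verifying descent along a single continuous surjection. I will use two elementary topological facts. First, if $f_1\colon T_1\to S$ and $f_2\colon T_2\to S$ are continuous maps from Stone spaces to a compact Hausdorff space $S$, then $T_1\times_S T_2$ is again a Stone space: it is the preimage of the (closed, since $S$ is Hausdorff) diagonal under $T_1\times T_2\to S\times S$, hence a closed subspace of the Stone space $T_1\times T_2$. Second, the base change of a continuous surjection of compact Hausdorff spaces along any continuous map of compact Hausdorff spaces is again surjective, and a composite of surjections is surjective. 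By Proposition~\ref{PropStoneProjection} every $S\in{\bf CH}$ receives a continuous surjection from a Stone space; I fix one functorially, namely $\widetilde S:=\beta(S_{\mathrm{disc}})$, the Stone--\v{C}ech compactification of the set underlying $S$ with the discrete topology, equipped with its canonical surjection $\widetilde S\twoheadrightarrow S$. Then $S\mapsto\widetilde S$, $S\mapsto(\widetilde S\to S)$ and $S\mapsto(\widetilde S\times_S\widetilde S\rightrightarrows\widetilde S)$ are functors on {\bf CH}.

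\emph{The sheaf $Y$ and the key lemma.} Define a presheaf $Y$ on {\bf CH} by
\[
  Y(S):=\mathrm{Eq}\!\Bigl(X(\widetilde S)\rightrightarrows X(\widetilde S\times_S\widetilde S)\Bigr),
\]
the equalizer of the pullbacks along the two projections; this is functorial by the previous paragraph. The heart of the proof is the assertion that $Y(S)$ is computed by \emph{every} Stone resolution, compatibly with restriction: for any continuous surjection $p\colon T\twoheadrightarrow S$ with $T$ a Stone space, the evident map is an isomorphism $Y(S)\cong\mathrm{Eq}\bigl(X(T)\rightrightarrows X(T\times_S T)\bigr)$, and for a morphism $g\colon S\to S_0$ of {\bf CH} and Stone resolutions $T\to S$, $T_0\to S_0$ fitting in a square over $g$, the induced maps intertwine these identifications with $g^\ast\colon Y(S_0)\to Y(S)$. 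This follows from the \emph{refinement invariance of the descent equalizer}: if $p\colon T\twoheadrightarrow S$ is as above and $r\colon U\twoheadrightarrow T$ is a continuous surjection of Stone spaces, then $r^\ast$ is a bijection $\mathrm{Eq}(X(T)\rightrightarrows X(T\times_S T))\xrightarrow{\ \sim\ }\mathrm{Eq}(X(U)\rightrightarrows X(U\times_S U))$. Injectivity is separatedness of $X$ for the cover $r$; for surjectivity, a matching section $v\in X(U)$ glues along $r$ because its two pullbacks to $U\times_T U$ agree --- they are the restrictions to the closed Stone subspace $U\times_T U\subseteq U\times_S U$ of the two (equal) pullbacks of $v$ to $U\times_S U$ --- and the resulting section of $X(T)$ lies in the equalizer by separatedness of $X$ for the Stone cover $U\times_S U\twoheadrightarrow T\times_S T$. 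The two topological facts above are exactly what make $T\times_S T$, $U\times_S U$, $U\times_T U$ Stone spaces and the maps among them coverings, so that the sheaf property of $X$ on {\bf Stone} applies. General cover-independence then follows by comparing $T$ with $\widetilde S$ through the common Stone refinement $T\times_S\widetilde S$, and the compatibility with restriction by the same bookkeeping, after observing that for $g\colon S\to S_0$ and a Stone resolution $T_0\to S_0$ one obtains a compatible Stone resolution of $S$ by resolving the compact Hausdorff space $S\times_{S_0}T_0$. \emph{This lemma --- especially the naturality clause --- is the step I expect to be the main obstacle}; the rest is formal.

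\emph{Restriction and descent.} Applying cover-independence to $S\in{\bf Stone}$ with the identity cover $S\to S$ (whose \v{C}ech nerve is constant) gives $Y(S)\cong\mathrm{Eq}(X(S)\rightrightarrows X(S))=X(S)$, naturally in $S$; so $Y|_{\bf Stone}\cong X$. For the sheaf axiom on {\bf CH} it suffices to treat a surjection $q\colon S'\twoheadrightarrow S$. Pick a Stone resolution $T\twoheadrightarrow S'$; then $T\twoheadrightarrow S$ and $T\times_S T\twoheadrightarrow S'\times_S S'$ are Stone resolutions, and cover-independence identifies $Y(S)$, $Y(S')$, $Y(S'\times_S S')$ with the descent equalizers of $X$ for $T\twoheadrightarrow S$, $T\twoheadrightarrow S'$, $T\times_S T\twoheadrightarrow S'\times_S S'$ respectively. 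Under these identifications $q^\ast\colon Y(S)\to Y(S')$ becomes the inclusion of $\mathrm{Eq}(X(T)\rightrightarrows X(T\times_S T))$ into $\mathrm{Eq}(X(T)\rightrightarrows X(T\times_{S'}T))$, which is injective since $T\times_{S'}T\subseteq T\times_S T$; and an element $a'\in Y(S')$, corresponding to $\alpha'\in X(T)$, with $p_0^\ast a'=p_1^\ast a'$ translates (using the compatibility clause for the coverings $T\times_S T\to T$ over the projections $p_i\colon S'\times_S S'\to S'$) into the equality of the two pullbacks of $\alpha'$ to $X(T\times_S T)$, i.e.\ into $\alpha'\in\mathrm{Eq}(X(T)\rightrightarrows X(T\times_S T))=Y(S)$; this element of $Y(S)$ restricts to $a'$, and it is the unique such by the injectivity just noted. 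Hence $Y$ is a sheaf on {\bf CH}.

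\emph{Uniqueness.} If $Y'$ is a sheaf on {\bf CH} equipped with an isomorphism $Y'|_{\bf Stone}\cong X$, then for every $S\in{\bf CH}$ the cover $\widetilde S\twoheadrightarrow S$ has $\widetilde S$ and $\widetilde S\times_S\widetilde S$ in {\bf Stone}, so the sheaf axiom for $Y'$ gives $Y'(S)\cong\mathrm{Eq}(Y'(\widetilde S)\rightrightarrows Y'(\widetilde S\times_S\widetilde S))\cong\mathrm{Eq}(X(\widetilde S)\rightrightarrows X(\widetilde S\times_S\widetilde S))=Y(S)$, and one checks that these combine into an isomorphism $Y'\cong Y$ restricting to the given one on {\bf Stone}, necessarily unique. (Conceptually this is the Comparison Lemma for sites applied to the fully faithful continuous inclusion ${\bf Stone}\hookrightarrow{\bf CH}$, which by Proposition~\ref{PropStoneProjection} and the first topological fact above is cofinal with {\bf Stone} stable under fibre products in {\bf CH}; the argument above checks its hypotheses by hand.)
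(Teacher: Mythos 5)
Your proposal is correct and is essentially the paper's own proof: both define $Y(S)$ as the \v{C}ech descent equalizer along the functorial surjection $\beta(S_{\mathrm{disc}})\twoheadrightarrow S$ supplied by Proposition~\ref{PropStoneProjection}, both establish independence of the chosen resolution by passing to a common refinement $T\times_S\widetilde S$ (the paper's $\widehat{X''}=\widehat{X}\times_X\widehat{X'}$), both verify the sheaf axiom by the same equalizer-of-equalizers diagram chase, and both obtain uniqueness from the sheaf axiom applied to the Stone cover. The only substantive difference is that the paper factors through {\bf Stonean} (Theorem~\ref{main2}) while you stay entirely within {\bf Stone}; your variant is technically cleaner, since fiber products such as $\widetilde S\times_S\widetilde S$ of Stone spaces over a compact Hausdorff base remain Stone, whereas the paper's route needs $\widehat{X}\times_X\widehat{X}$ to be Stonean, which it asserts without justification and which does not hold for general extremally disconnected spaces.
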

As an application, we show that there exists a model structure on the category of condensed sets (cf. Section 3 and Corollary \ref{CorModel}).
Theorem \ref{main1} is necessary to prove Lemma \ref{LemSmall}.\\[10pt]
To discuss the {\it sheaf theoritic h-principle}, Gromov \cite{gromov2013partial} uses a {\it quasitopological space}.
Corollary \ref{CorModel} implies that a quasitopological space can be replaced by a condensed set in the {\it sheaf theoritic h-principle} (cf. \cite{yamazaki2021fibration}).
To apply it to the {\it Oka-Grauert principle}, it is necessary to define the completeness and the denseness (cf. \cite{studer2020homotopy}).
		\subsection*{Acknowledgements}
I would like to thank Tomohiro Asano for telling me about a condensed set.
\setcounter{section}{0}
\setcounter{thm}{0}
	\section{Condensed sets}
We will review the condensed sets.
A {\it condensed set} is a sheaf on the site {\bf Stone}, where {\bf Stone} is the category of Stone spaces and continuous maps.
The Grothendieck (pre)topology on {\bf Stone} is defined in Section 1.2.
	\subsection{Stone spaces}
First, we will review the Stone spaces.
\begin{defi} \label{DefStone}
A {\it Stone space} or a {\it profinite set} is a totally disconnected, compact and Hausdorff topological space.
We denote the category of Stone spaces and continuous maps by {\bf Stone}.
\end{defi}
Recall that a topological space $X$ is {\it totally disconnected} if all of connected components are singletons.
Stone spaces correspond to Boolean rings by {\it Stone's representation theorem}.
By this correspondence, complete Boolean rings corresponds to Stonean spaces.
\begin{defi} \label{DefStonean}
A topological space $X$ is {\it extremally disconnected} if the closure of any open subset is open.
A {\it Stonean space} is an extremally disconnected, compact and Hausdorff topological space.
We denote the category of Stonean spaces and continuous maps by {\bf Stonean}.
\end{defi}
We denote the category of compact Hausdorff spaces and continuous maps by {\bf CH}.
The following Proposition means the ``cofinality'' of {\bf Stone} and {\bf Stonean} in {\bf CH}.
\begin{prop} \label{PropStoneProjection}
There exist a functor ${\bf CH} \rightarrow {\bf Stonean}; X \mapsto \widehat{X}$ and a natural transformation $\widehat{X} \rightarrow X$ such that each component is surjective.
\end{prop}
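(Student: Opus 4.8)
The plan is to let $\widehat{X}$ be the Stone--\v{C}ech compactification of the underlying set of $X$ taken with the discrete topology. Write $X^{\delta}$ for the set $X$ equipped with the discrete topology and set $\widehat{X} := \beta(X^{\delta})$. Since $X^{\delta}$ is discrete it is Tychonoff, so $\beta(X^{\delta})$ is a well-defined compact Hausdorff space; concretely it is the space of ultrafilters on $X$, i.e.\ the Stone space of the Boolean algebra $\mathcal{P}(X)$ of all subsets of $X$. As recalled above, complete Boolean algebras correspond under Stone duality to Stonean spaces, and $\mathcal{P}(X)$ is complete, so $\widehat{X}$ is Stonean. (Equivalently one invokes the classical fact that $\beta D$ is extremally disconnected for every discrete space $D$.) This is the only external input; note that $\widehat{X}$ is \emph{not} the Gleason projective cover of $X$, whose defining irreducible surjection is not visibly functorial --- the point of using $\beta(X^{\delta})$ instead is precisely that it is strictly functorial.

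Next I would establish functoriality. Given a continuous map $f \colon X \to Y$ in ${\bf CH}$, the underlying set map $X^{\delta} \to Y^{\delta}$ is automatically continuous; composing with the canonical dense embedding $Y^{\delta} \hookrightarrow \widehat{Y}$ and applying the universal property of $\beta$ --- the target $\widehat{Y}$ being compact Hausdorff --- yields a \emph{unique} continuous extension $\widehat{f} \colon \widehat{X} \to \widehat{Y}$. The usual uniqueness-of-extension argument (two continuous maps into a Hausdorff space that agree on the dense subset $X^{\delta}$ are equal) then gives $\widehat{g \circ f} = \widehat{g} \circ \widehat{f}$ and $\widehat{\mathrm{id}_{X}} = \mathrm{id}_{\widehat{X}}$, so $X \mapsto \widehat{X}$ is a functor ${\bf CH} \to {\bf Stonean}$.

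For the natural transformation, the identity function $X^{\delta} \to X$ is continuous (the source is discrete) and $X$ is compact Hausdorff, so it extends uniquely to a continuous map $\varepsilon_{X} \colon \widehat{X} \to X$. Naturality of $\varepsilon = (\varepsilon_{X})_{X}$ is again a uniqueness argument: for $f \colon X \to Y$ the two continuous maps $f \circ \varepsilon_{X}$ and $\varepsilon_{Y} \circ \widehat{f}$ from $\widehat{X}$ to $Y$ both restrict to $f$ on the dense subset $X^{\delta}$, hence coincide. Finally $\varepsilon_{X}$ is surjective: its image is compact, hence closed in $X$, and it already contains $\varepsilon_{X}(X^{\delta}) = X$. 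So each component of $\varepsilon$ is surjective, as required.

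I do not expect a serious obstacle here; the only places that need care are (i) confirming that $\beta(X^{\delta})$ is extremally disconnected --- i.e.\ correctly placing it among the standard facts about Stone--\v{C}ech compactifications of discrete spaces / Stone duality for complete Boolean algebras --- and (ii) making sure that every extension used is taken against a compact Hausdorff target, so that the universal property genuinely delivers \emph{uniqueness} and hence functoriality and naturality for free.
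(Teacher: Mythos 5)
Your construction $\widehat{X}=\beta(X^{\delta})$ is exactly the paper's: the paper takes $\widehat{X}=FG(X)$ where $F\dashv G$ is the (discrete Stone--\v{C}ech) $\dashv$ (forgetful) adjunction between ${\bf Set}$ and ${\bf CH}$, and obtains the surjective natural transformation as the counit, which is precisely what you unwind by hand via the universal property. The proposal is correct and takes essentially the same approach.
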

\begin{proof}
The forgetful functor $G: {\bf CH} \rightarrow {\bf Set}$ has a left adjoint functor $F: {\bf Set} \rightarrow {\bf CH}$.
The functor $F$ maps a set $X$ to the Stone–\v{C}ech compactification of $X$ with the discrete topology.
(See \cite{mac2013categories}.)
As is well known, the Stone–\v{C}ech compactification of a discrete space is a Stonean space.
Define $\widehat{X} = FG(X)$ for each compact Hausdorff space $X$.
The counit of the adjunction $F \dashv G$ gives a natural transformation $\widehat{X} \rightarrow X$ such that each component is surjective.
\end{proof}
	\subsection{Topologies on each categories}
Recall that {\bf CH} (resp. {\bf Stone}, {\bf Stonean}) is the category of compact Hausdorff spaces (resp. Stone spaces, Stonean spaces) (cf. Definition \ref{DefStone} and Definition \ref{DefStonean}).
\begin{defi} \label{DefCovering}
Let $X$ be a compact Hausdorff space.
A class of continuous maps $\{ U_i \rightarrow X \}_{i \in I}$ is a loose (resp. middle, tight) covering if the set of index $I$ is finite, $\coprod_i U_i \rightarrow U$ is surjective and all $U_i$ are compact Hausdorff spaces (resp. Stone spaces, Stonean spaces).
\end{defi}
{\bf CH} (resp. {\bf Stone}, {\bf Stonean}) is a site with the loose (resp. middle, tight) coverings as the coverings.
Each other coverings on the categories {\bf CH} and {\bf Stone} does not define a pretopology, (at least not in a popular way).
This is because, for example, the pull-back of a middle covering may not be a middle covering in {\bf CH}.
However, the sheaf conditions they define are all equivalent.
\begin{prop} \label{PropCHCovering}
Let $\mathcal{F}$ be a presheaf on {\bf CH}.
The followings are equivalents.
\begin{enumerate}
\item $\mathcal{F}$ is a sheaf.
\item For any middle covering $\{ U_i \rightarrow U \}_i$ of any compact Hausdorff space $U$, the following diagram is an equalizer:
\[
\mathcal{F}(U) \rightarrow \prod_i \mathcal{F}(U_i) \rightrightarrows \prod_{i, j} \mathcal{F}(U_i \times_U U_j).
\]
\item For any tight covering $\{ U_i \rightarrow U \}_i$ of any compact Hausdorff space $U$, the following diagram is an equalizer:
\[
\mathcal{F}(U) \rightarrow \prod_i \mathcal{F}(U_i) \rightrightarrows \prod_{i, j} \mathcal{F}(U_i \times_U U_j).
\]
\end{enumerate}
\end{prop}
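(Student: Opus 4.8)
The plan is to prove the cycle of implications (1) $\Rightarrow$ (2) $\Rightarrow$ (3) $\Rightarrow$ (1). The implications (1) $\Rightarrow$ (2) and (2) $\Rightarrow$ (3) are essentially immediate, since a tight covering is in particular a middle covering (Stonean spaces are Stone spaces), and a middle covering is in particular a loose covering; so the equalizer condition in (1) for arbitrary loose coverings specializes directly to the statements in (2) and (3). The only real content is (3) $\Rightarrow$ (1): from the sheaf condition on tight coverings of compact Hausdorff spaces, we must recover the sheaf condition for an arbitrary loose covering $\{U_i \to U\}_i$.

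The key step for (3) $\Rightarrow$ (1) is to refine an arbitrary loose covering by a tight one and compare the two equalizer diagrams. Given a finite loose covering $\{U_i \to U\}_{i \in I}$, apply Proposition \ref{PropStoneProjection} to each $U_i$ and to $U$ itself: we get surjections $\widehat{U_i} \to U_i$ and $\widehat{U} \to U$ from Stonean spaces. Since $\coprod_i U_i \to U$ is surjective and each $\widehat{U_i}\to U_i$ is surjective, the composites $\widehat{U_i} \to U_i \to U$ form a finite family of maps from Stonean spaces that is jointly surjective onto $U$, i.e.\ a tight covering $\{\widehat{U_i} \to U\}_i$ that refines $\{U_i \to U\}_i$. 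I would then run the standard ``refinement'' argument for sheaves: one shows that if the equalizer condition holds for a refinement and for the ``overlap'' coverings (the pullbacks $\widehat{U_i}\times_U U_j$ and so on, which can again be covered by Stonean spaces via Proposition \ref{PropStoneProjection}), then it holds for the original covering. Concretely, a matching family $(s_i)_i$ with $s_i \in \mathcal F(U_i)$ pulls back to a matching family on the tight covering $\{\widehat{U_i}\to U\}_i$, hence glues to a unique $s \in \mathcal F(U)$; and to see that $s$ restricts to $s_i$ on $U_i$ (not merely on $\widehat{U_i}$) one uses that $\widehat{U_i}\times_U U_i \to U_i$ admits enough Stonean covers so that $\mathcal F(U_i)\to \mathcal F(\widehat{U_i})$ is injective — this follows from applying the tight-covering equalizer condition to the single-map tight covering $\{\widehat{U_i}\to U_i\}$, whose surjectivity makes $\mathcal F(U_i) \to \mathcal F(\widehat{U_i})$ injective. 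Uniqueness of $s$ follows the same way using injectivity of $\mathcal F(U)\to\mathcal F(\widehat U)$.

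I expect the main obstacle to be the bookkeeping around fiber products: one must check that the relevant pullbacks such as $\widehat{U_i}\times_U \widehat{U_j}$ (or $\widehat{U_i}\times_U U_j$) are compact Hausdorff — which they are, as closed subspaces of products of compact Hausdorff spaces — and then produce Stonean refinements of these as well, so that the comparison of equalizer diagrams for the loose covering and its tight refinement is genuinely a comparison of limits that one can evaluate termwise. The cleanest way to organize this is to prove the general lemma that if $\mathcal F$ satisfies the equalizer condition for all tight coverings, then for any loose covering $\{U_i\to U\}_i$ and any choice of tight covers of $U$, of each $U_i$, and of each $U_i\times_U U_j$, the natural map from $\mathcal F(U)$ to the equalizer of $\prod_i\mathcal F(U_i)\rightrightarrows\prod_{i,j}\mathcal F(U_i\times_U U_j)$ is an isomorphism; this reduces everything to diagram-chasing in $\mathbf{Set}$ together with repeated use of the injectivity statement above. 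Once this lemma is in place, statement (1) is immediate, and the proof is complete.
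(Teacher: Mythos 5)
Your proposal is correct and follows essentially the same route as the paper: refine the loose covering by the tight covering $\{\widehat{U_i}\to U\}_i$, apply the equalizer condition there, and descend using the injectivity of $\prod_i\mathcal F(U_i)\to\prod_i\mathcal F(\widehat{U_i})$ coming from the one-element tight coverings $\{\widehat{U_i}\to U_i\}$. The additional Stonean covers of the overlaps $U_i\times_U U_j$ that you anticipate needing are not actually required; the paper's two-row comparison diagram closes the argument without them.
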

\begin{proof}
{\it 1.} $\Rightarrow$ {\it 2.} $\Rightarrow$ {\it 3.} is trivial.
Assume that we have {\it 3.}.
Take any compact Hausdorff space $U$ and any loose covering $\{ U_i \rightarrow U \}_i$ of $U$.
For each index $i$, take a surjection $\widehat{U_i} \rightarrow U_i$ from a Stonean space $\widehat{U_i}$ to $U_i$ (cf. Proposition \ref{PropStoneProjection}).
Consider the following diagram:
\[\xymatrix{
		\mathcal{F}(U) \ar[r]
		& \prod_i \mathcal{F}(\widehat{U_i}) \ar@<2pt>[r] \ar@<-2pt>[r]
		& \prod_{i, j} \mathcal{F}(\widehat{U_i} \times_U \widehat{U_j})
	\\
		\mathcal{F}(U) \ar[r] \ar@{=}[u]
		& \prod_i \mathcal{F}(U_i) \ar@<2pt>[r] \ar@<-2pt>[r] \ar[u]
		& \prod_{i, j} \mathcal{F}(U_i \times_U U_j). \ar[u]
}\]
The upper row is an equalizer because $\{ \widehat{U_i} \rightarrow U \}_i$ is a tight covering of $U$.
The middle vertical map $\prod_i \mathcal{F}(U_i) \rightarrow \prod_i \mathcal{F}(\widehat{U_i})$ is injective because $\{ \widehat{U_i} \rightarrow U_i \}$ is a tight covering of $U_i$ for any $i$.
Then, the lower row is an equalizer.
Therefore, $\mathcal{F}$ is a sheaf.
\end{proof}
In exactly the same way, we obtain the following proposition.
\begin{prop} \label{PropStoneCovering}
Let $\mathcal{F}$ be a presheaf on {\bf Stone}.
The followings are equivalents.
\begin{enumerate}
\item $\mathcal{F}$ is a sheaf.
\item For any tight covering $\{ U_i \rightarrow U \}_i$ of any Stone space $U$, the following diagram is an equalizer:
\[
\mathcal{F}(U) \rightarrow \prod_i \mathcal{F}(U_i) \rightrightarrows \prod_{i, j} \mathcal{F}(U_i \times_U U_j).
\]
\end{enumerate}
\end{prop}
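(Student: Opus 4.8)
The plan is to mirror the proof of Proposition \ref{PropCHCovering} exactly, as the preceding sentence advertises. The implications \emph{1.} $\Rightarrow$ \emph{2.} are trivial since a tight covering of a Stone space is in particular a middle covering, and in fact the sheaf condition in the sense of a Grothendieck pretopology reduces to the equalizer condition stated. So the whole content is the reverse implication: assuming the equalizer condition holds for all tight coverings, I must show $\mathcal{F}$ is a sheaf for the middle pretopology, i.e. that the equalizer diagram is exact for an arbitrary middle covering $\{U_i \to U\}_i$ of an arbitrary Stone space $U$.

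The key steps, in order, are as follows. First I would take an arbitrary middle covering $\{U_i \to U\}_i$ of a Stone space $U$; here each $U_i$ is already a Stone space, but not necessarily Stonean. Second, for each $i$ I choose a surjection $\widehat{U_i} \to U_i$ from a Stonean space, via Proposition \ref{PropStoneProjection} (note $\widehat{U_i}$ is Stonean and in particular Stone, so this is legitimate within {\bf Stone}). The composite $\widehat{U_i} \to U_i \to U$ then makes $\{\widehat{U_i} \to U\}_i$ a tight covering of the Stone space $U$. Third, I write down the same two-row commutative diagram as in the proof of Proposition \ref{PropCHCovering}, with $\mathcal{F}(U) \to \prod_i \mathcal{F}(\widehat{U_i}) \rightrightarrows \prod_{i,j} \mathcal{F}(\widehat{U_i} \times_U \widehat{U_j})$ on top and the corresponding diagram for $\{U_i \to U\}_i$ on the bottom, with the identity on $\mathcal{F}(U)$ and the restriction maps $\mathcal{F}(U_i) \to \mathcal{F}(\widehat{U_i})$ and $\mathcal{F}(U_i \times_U U_j) \to \mathcal{F}(\widehat{U_i} \times_U \widehat{U_j})$ as the vertical arrows. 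Fourth, the top row is an equalizer by hypothesis \emph{2.} applied to the tight covering $\{\widehat{U_i} \to U\}_i$. Fifth, the middle vertical map is injective: for each $i$, $\{\widehat{U_i} \to U_i\}$ is a tight covering of the Stone space $U_i$ (a single surjection from a Stonean space), so hypothesis \emph{2.} gives that $\mathcal{F}(U_i) \to \mathcal{F}(\widehat{U_i})$ is the equalizer of a pair of maps out of $\mathcal{F}(\widehat{U_i} \times_{U_i} \widehat{U_i})$, hence in particular injective; taking the product over $i$ preserves injectivity. A routine diagram chase then shows the bottom row is an equalizer, so $\mathcal{F}$ satisfies the middle sheaf condition, and by Proposition \ref{PropCHCovering} (or directly) $\mathcal{F}$ is a sheaf.

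The main obstacle, such as it is, is simply checking that all the ingredients of the {\bf CH} argument survive the restriction to {\bf Stone}: that $\widehat{U_i}$ lands in {\bf Stone} (it does, being Stonean hence totally disconnected), that $\{\widehat{U_i} \to U\}_i$ and $\{\widehat{U_i} \to U_i\}$ are genuine tight coverings within the Stone site (finiteness of the index set and surjectivity of the coproduct map are inherited), and that the fiber products $\widehat{U_i} \times_U \widehat{U_j}$ taken in {\bf Stone} agree with those taken in {\bf CH} (they do, since {\bf Stone} is closed under finite limits in {\bf CH} — a closed subspace of a product of Stone spaces is Stone). Once these are in place the diagram chase is formal and identical to the previous proof, which is why the paper says "in exactly the same way"; I would therefore present the proof tersely, writing out only the diagram and the two observations that the top row is an equalizer and the middle vertical map is injective, and concluding as before.
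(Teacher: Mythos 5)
Your proposal is correct and matches the paper's intent exactly: the paper gives no separate argument, saying only that the result follows ``in exactly the same way'' as Proposition \ref{PropCHCovering}, and your write-up is precisely that adaptation (Stonean surjections $\widehat{U_i}\to U_i$, the two-row equalizer diagram, injectivity of the middle vertical map). Your added checks that $\widehat{U_i}$ and the fiber products stay inside {\bf Stone} are the right details to verify and are all sound.
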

	\subsection{Condensed sets}
We define a condensed set.
\begin{defi} \label{DefCondensed}
A {\it condensed set} is a sheaf on the site {\bf Stone} (cf. Section 1.2).
\end{defi}
\begin{rem}
The site {\bf Stone} is not small.
Then, the graph of a condensed set is not small.
This is inconvenient when defining a category {\bf Cond} of condensed sets.
It is known that such set-theoretic problems can be avoided (cf. \cite{clausen2022condensed}).
Furthermore, the category {\bf Cond} of condensed sets can be locally small.
\end{rem}
\begin{ex} \label{ExTopCond}
Let $X$ be a topological space.
$X$ has an associated condensed set $\overline{X}$.
$\overline{X}$ is defined as the following: for each Stone space $A$, 
\[
\overline{X}(A) = \{ \mbox{continuous maps} A \rightarrow X \}.
\]
The construction $X \mapsto \overline{X}$ induces a functor $G_1: {\bf Top} \rightarrow {\bf Cond}$, where {\bf Top} is the category of topological spaces and continuous maps.
\end{ex}
The above functor $G_1$ has a left adjoint $F_1: {\bf Cond} \rightarrow {\bf Top}$.
We will define the functor $F_1$.
Each condensed set $X$ has an underlying set $X({\bf 1})$, where {\bf 1} is the singleton.
Fix any Stone space $A$.
For each point $a \in A$, we write the restriction map associated with the constant map ${\bf 1} \overset{a}{\rightarrow} A$ as $a^\ast: X(A) \rightarrow X({\bf 1})$.
Each element $f \in X(A)$ has an underlying map $\underline{f}: A \rightarrow X({\bf 1})$ defined as $\underline{f}(a) = a^\ast(f)$.
Define a topology on the set $X({\bf 1})$ as the strongest topology for which $\underline{f}$ is continuous for any Stone space $A$ and any element $f \in X(A)$.
We write the topological space $X({\bf 1})$ as $F_1(X)$.
The construction $X \mapsto F_1(X)$ induces a functor $F_1: {\bf Cond} \rightarrow {\bf Top}$.
This functor $F_1$ is a left adjoint of the functor $G_1$.
	\section{Main theorem}
We prove the following main theorem.
\begin{thm} \label{main1}
For any condensed set $X$, there exists a sheaf $Y$ on the site {\bf CH} such that the restriction of $Y$ to {\bf Stone} is isomorphic to $X$.
Moreover, $Y$ is unique up to isomorphic.
\end{thm}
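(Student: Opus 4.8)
The plan is to construct $Y$ explicitly by right Kan extension along the inclusion $\iota\colon \mathbf{Stone} \hookrightarrow \mathbf{CH}$ and to check that the result is a sheaf whose restriction recovers $X$. First I would set, for each compact Hausdorff space $U$,
\[
Y(U) = \varprojlim_{(A \to U)} X(A),
\]
where the limit is over the comma category of Stone spaces $A$ equipped with a continuous map $A \to U$; equivalently, using Proposition \ref{PropStoneProjection}, it suffices to take the equalizer associated to a single chosen presentation $\widehat{U} \to U$ together with its \v{C}ech nerve, i.e.
\[
Y(U) = \mathrm{eq}\bigl( X(\widehat U) \rightrightarrows X(\widehat U \times_U \widehat U) \bigr).
\]
This is well defined because $\widehat U$ and the fibre products $\widehat U \times_U \widehat U$ are Stone spaces (closed subspaces of products of Stonean spaces are Stone), and functoriality in $U$ comes from the naturality of $X \mapsto \widehat X$. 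For $U$ itself a Stone space, the identity $U \to U$ is cofinal in the comma category, so $Y(U) \cong X(U)$, giving the restriction statement.

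Next I would verify that $Y$ is a sheaf on $\mathbf{CH}$. By Proposition \ref{PropCHCovering} it is enough to check the sheaf condition for tight coverings $\{U_i \to U\}$, i.e.\ with all $U_i$ Stonean (hence Stone). For such a covering, each $U_i$ is already in $\mathbf{Stone}$, so $Y(U_i) = X(U_i)$ and $Y(U_i \times_U U_j)$ sits inside $X(\widehat{U_i \times_U U_j})$; I would then compare the putative equalizer for $Y$ with the equalizer diagrams for $X$ along the tight covering $\{\widehat U_i \to U\}$ obtained by composing, using that $\coprod \widehat U_i \to U$ is again a tight covering and that $X$ is a sheaf. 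The key point is a diagram chase showing that a compatible family $(s_i) \in \prod_i X(U_i)$ that agrees on the Stone fibre products glues to a unique element of $Y(U)$; this reduces to the sheaf condition for $X$ applied to the cover $\{\widehat U_i \to \widehat U\}$ of $\widehat U$, after checking that the two relevant presentations of the fibre products are compatible.

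For uniqueness, suppose $Y'$ is another sheaf on $\mathbf{CH}$ restricting to $X$. Given any $U \in \mathbf{CH}$, the surjection $\widehat U \to U$ is a loose (indeed tight) covering, so the sheaf condition for $Y'$ forces
\[
Y'(U) \cong \mathrm{eq}\bigl( Y'(\widehat U) \rightrightarrows Y'(\widehat U \times_U \widehat U) \bigr) = \mathrm{eq}\bigl( X(\widehat U) \rightrightarrows X(\widehat U \times_U \widehat U) \bigr) = Y(U),
\]
naturally in $U$, since all the spaces appearing on the right are Stone. This shows $Y' \cong Y$ compatibly with the identification on $\mathbf{Stone}$.

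The main obstacle I anticipate is the sheaf-gluing verification in the second step: one must be careful that the fibre products $\widehat{U_i} \times_U \widehat{U_j}$, $\widehat{U_i} \times_{U_i} \widehat{U_j}$ (when relevant) and $\widehat{U_i \times_U U_j}$ are related by canonical maps inducing the right comparisons, and that cofinality arguments in the comma categories $\mathbf{Stone}_{/U}$ genuinely apply — this rests on Proposition \ref{PropStoneProjection} giving surjections, plus the fact that fibre products of Stone spaces over a compact Hausdorff space are Stone. Once those formal checks are in place, everything else is a routine manipulation of equalizers and the universal property of the sheaf condition.
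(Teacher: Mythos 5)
Your construction is essentially the paper's own proof: both define $Y(U)$ as the equalizer of $X(\widehat U) \rightrightarrows X(\widehat U \times_U \widehat U)$ for the functorial surjection $\widehat U \rightarrow U$ of Proposition \ref{PropStoneProjection}, verify the sheaf condition via the same three-by-three equalizer diagram over a tight covering (Proposition \ref{PropCHCovering}), and obtain uniqueness by the same \v{C}ech-descent argument. The only difference is packaging: the paper first restricts $X$ to {\bf Stonean} and extends from there (Theorem \ref{main2}), whereas you extend directly from {\bf Stone}, which changes nothing essential (and in fact avoids having to evaluate a sheaf defined only on {\bf Stonean} at the fibre product $\widehat U \times_U \widehat U$, which is always a Stone space but need not be Stonean).
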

This theorem follows immediately from the following theorem.
\begin{thm} \label{main2}
For any sheaf $\mathcal{F}$ on the site {\bf Stonean}, there exists a sheaf $\mathcal{G}$ on the site {\bf CH} (resp. {\bf Stone}) such that the restriction of $\mathcal{G}$ to {\bf Stonean} is isomorphic to $\mathcal{F}$.
Moreover, $\mathcal{G}$ is unique up to isomorphic.
\end{thm}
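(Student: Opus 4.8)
The goal is to show that the restriction functor from sheaves on $\mathbf{CH}$ (or $\mathbf{Stone}$) to sheaves on $\mathbf{Stonean}$ is an equivalence. The key structural facts are: (1) $\mathbf{Stonean}$ sits inside $\mathbf{CH}$ (and inside $\mathbf{Stone}$) as a full subcategory; (2) every object of $\mathbf{CH}$ admits a surjection from a Stonean space by Proposition~\ref{PropStoneProjection}; and (3) the covering families on $\mathbf{CH}$, $\mathbf{Stone}$, $\mathbf{Stonean}$ are all generated by finite surjective families, with tight coverings sufficing to test the sheaf condition (Proposition~\ref{PropCHCovering}, Proposition~\ref{PropStoneCovering}). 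The strategy is the standard one for "a dense/cofinal subsite induces equivalence of sheaf categories": construct an explicit extension by right Kan extension along the inclusion, and verify it lands in sheaves and is inverse to restriction.

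First I would check that $\mathbf{Stonean}$ is a \emph{dense} subsite of $\mathbf{CH}$ in the sense of the comparison lemma (Stacks project, "Tag 039Z", or SGA4): every object $U\in\mathbf{CH}$ has a tight covering $\{\widehat{U}\to U\}$ by a single Stonean space (Proposition~\ref{PropStoneProjection}), and more generally any covering of $U$ can be refined to one by Stoneans, with the required compatibility of fibre products handled by noting $\widehat{U}\times_U\widehat{U}$ again admits a Stonean surjection. Given density, the comparison lemma yields directly that restriction $\mathrm{Sh}(\mathbf{CH})\to\mathrm{Sh}(\mathbf{Stonean})$ is an equivalence; the quasi-inverse sends a sheaf $\mathcal F$ on $\mathbf{Stonean}$ to the sheaf $\mathcal G$ with $\mathcal G(U)=\varprojlim \mathcal F$ over the diagram of Stonean spaces mapping to $U$, or equivalently $\mathcal G(U)=\mathrm{eq}\bigl(\mathcal F(\widehat U)\rightrightarrows\mathcal F(\widehat U\times_U\widehat U)\bigr)$ for any chosen Stonean surjection $\widehat U\to U$. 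I would carry out the same argument verbatim for $\mathbf{Stone}$ in place of $\mathbf{CH}$, since $\mathbf{Stonean}\subset\mathbf{Stone}$ is dense by the identical reasoning (Stone–\v{C}ech of the underlying set of a Stone space is Stonean and surjects onto it).

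The concrete steps, in order: (i) define $\mathcal G$ on objects by the equalizer formula above, choosing for each $U$ a Stonean surjection $p_U\colon\widehat U\to U$; (ii) show $\mathcal G(U)$ is independent of the choice of $p_U$ up to canonical isomorphism, by comparing any two choices through their fibre product over $U$, which is again covered by a Stonean; (iii) make $\mathcal G$ functorial in $U$ — given $f\colon V\to U$ in $\mathbf{CH}$, lift to a map of chosen Stonean presentations after possibly refining (using that $\widehat V\times_U\widehat U\to\widehat V$ admits a Stonean surjection), and check well-definedness; (iv) verify $\mathcal G$ is a sheaf on $\mathbf{CH}$ — by Proposition~\ref{PropCHCovering} it suffices to check the sheaf condition on tight coverings, where it follows by a diagram chase from the sheaf property of $\mathcal F$ on $\mathbf{Stonean}$; (v) construct the natural isomorphism $\mathcal G|_{\mathbf{Stonean}}\cong\mathcal F$ — for $U$ itself Stonean, $\mathrm{id}_U$ is an admissible presentation, and the sheaf condition for $\mathcal F$ on the tight covering $\{\widehat U\to U\}$ identifies the equalizer with $\mathcal F(U)$; (vi) for uniqueness, show any sheaf $\mathcal G'$ on $\mathbf{CH}$ restricting to $\mathcal F$ must satisfy $\mathcal G'(U)=\mathrm{eq}(\mathcal G'(\widehat U)\rightrightarrows\mathcal G'(\widehat U\times_U\widehat U))=\mathrm{eq}(\mathcal F(\widehat U)\rightrightarrows\mathcal F(\widehat U\times_U\widehat U))=\mathcal G(U)$ naturally, because $\{\widehat U\to U\}$ is a covering and $\mathcal G'$ is a sheaf, and $\widehat U,\widehat U\times_U\widehat U$ are handled by Stonean covers too.

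**Main obstacle.** The delicate point is functoriality, step (iii): the assignment $U\mapsto\widehat U$ from Proposition~\ref{PropStoneProjection} \emph{is} already functorial, so one natural approach is to use that canonical $\widehat U = FG(U)$ throughout, which sidesteps the choice issue entirely and makes $f\colon V\to U$ induce $\widehat f\colon\widehat V\to\widehat U$ on the nose; then $\mathcal G$ is manifestly a presheaf and only the sheaf condition (iv) requires work. The real content, then, concentrates in (iv) and (v): checking that the equalizer presheaf satisfies descent for tight covers, which amounts to a moderately intricate but formal manipulation of iterated fibre products of Stonean spaces (noting these fibre products are compact Hausdorff but generally not Stonean, so one must again interpose Stonean surjections and use that $\mathcal F$ is a sheaf on $\mathbf{Stonean}$ — this is exactly where extremal disconnectedness, i.e. projectivity of Stonean objects, makes the relevant refinement squares behave). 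I expect this descent verification to be the technical heart of the proof, with everything else being the bookkeeping of the comparison lemma.
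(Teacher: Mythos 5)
Your proposal follows essentially the same route as the paper: define $\mathcal{G}(U)$ as the equalizer of $\mathcal{F}(\widehat{U}) \rightrightarrows \mathcal{F}(\widehat{U} \times_U \widehat{U})$ using the functorial Stonean surjection of Proposition \ref{PropStoneProjection}, verify the sheaf condition on tight coverings via Proposition \ref{PropCHCovering}, identify the restriction to {\bf Stonean} with $\mathcal{F}$ by the sheaf condition, and prove uniqueness by the same equalizer comparison. The comparison-lemma framing is just packaging; the concrete steps (i)--(vi) match the paper's argument.
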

\begin{proof}[proof of Theorem \ref{main1}]
Take any condensed set $X$.
$X$ is a sheaf on the site {\bf Stone}.
Let $\mathcal{F}$ be the restriction of the sheaf $X$ to {\bf Stonean}.
There exists a sheaf $Y$ on the site {\bf CH} such that the restriction of $Y$ to {\bf Stonean} is isomorphic to $\mathcal{F}$.
Let $X'$ be the restriction of the sheaf $Y$ to {\bf Stone}.
$X'$ is isomorphic to $X$ because the restrictions of $X$ and $X'$ are both $\mathcal{F}$.
Such $Y$ is unique up to isomorphic.
\end{proof}
\begin{proof}[proof of Theorem \ref{main2}]
\mbox{}\\
\underline{\bf uniqueness}\par
Assume that there exist two sheaves $\mathcal{G}$ and $\mathcal{H}$ on the site {\bf CH} (resp. {\bf Stone}) such that the restrictions of them to {\bf Stonean} is isomorphic to $\mathcal{F}$.
It is sufficient if a natural isomorphism $\phi: \mathcal{G} \rightarrow \mathcal{H}$ is constructed.
Take any compact Hausdorff space (resp. Stone space) $X$.
There exists a surjection $\widehat{X} \rightarrow X$ from a Stonean space $\widehat{X}$ (cf. Proposition \ref{PropStoneProjection}).
$\widehat{X} \times_X \widehat{X}$ is also a Stonean space.
A bijection $\phi_X$ is defined by the following diagram:
\[\xymatrix{
		\mathcal{G}(X) \ar[r] \ar@{.>}[d]^-{\phi_X}
		& \mathcal{G}(\widehat{X}) \ar@<2pt>[r] \ar@<-2pt>[r] \ar[d]^-{\cong}
		& \mathcal{G}(\widehat{X} \times_X \widehat{X}) \ar[d]^-{\cong}
	\\
		\mathcal{H}(X) \ar[r]
		& \mathcal{H}(\widehat{X}) \ar@<2pt>[r] \ar@<-2pt>[r]
		& \mathcal{H}(\widehat{X} \times_X \widehat{X}).
}\]
We will show that the map $\phi_X$ does not depend on the surjection $\widehat{X} \rightarrow X$.
We write $\phi_{\widehat{X} \rightarrow X}$ for $\phi_X$ defined by $\widehat{X} \rightarrow X$.
Suppose another surjection $\widehat{X'} \rightarrow X$ from a Stonean space $\widehat{X'}$ exists.
Let $\widehat{X''} = \widehat{X} \times_X \widehat{X'}$.
By the following diagram, we obtain the equality $\phi_{\widehat{X} \rightarrow X} = \phi_{\widehat{X''} \rightarrow X}$:
\[\xymatrix{
		& \mathcal{G}(X) \ar[rr] \ar@{.>}[ddl]_-{\phi_{\widehat{X} \rightarrow X}} \ar@{=}[d]
		&& \mathcal{G}(\widehat{X}) \ar@<2pt>[r] \ar@<-2pt>[r] \ar[ddl]|\hole \ar[d]
		& \mathcal{G}(\widehat{X} \times_X \widehat{X}) \ar[d] \ar[ddl]|(.48)\hole|(.52)\hole
	\\
		& \mathcal{G}(X) \ar[rr] \ar@{.>}[ddl]^(.20){\phi_{\widehat{X''} \rightarrow X}}
		&& \mathcal{G}(\widehat{X''}) \ar@<2pt>[r] \ar@<-2pt>[r] \ar[ddl]
		& \mathcal{G}(\widehat{X''} \times_X \widehat{X''}) \ar[ddl]
	\\
		\mathcal{H}(X) \ar[rr]|(.25)\hole \ar@{=}[d]
		&& \mathcal{H}(\widehat{X}) \ar@<2pt>[r]|\hole \ar@<-2pt>[r]|(.48)\hole \ar[d]
		& \mathcal{H}(\widehat{X} \times_X \widehat{X}) \ar[d]
	\\
		\mathcal{H}(X) \ar[rr]
		&& \mathcal{H}(\widehat{X''}) \ar@<2pt>[r] \ar@<-2pt>[r]
		& \mathcal{H}(\widehat{X''} \times_X \widehat{X''}).
}\]
In exactly the same way, we obtain the equality $\phi_{\widehat{X'} \rightarrow X} = \phi_{\widehat{X''} \rightarrow X}$.
Then, we obtain the equality $\phi_{\widehat{X} \rightarrow X} = \phi_{\widehat{X''} \rightarrow X} = \phi_{\widehat{X'} \rightarrow X}$.
This shows that the map $\phi_X$ does not depend on the surjection $\widehat{X} \rightarrow X$.\par
Next, we will show that $X \mapsto \phi_X$ is natural.
Take any continuous map $X \rightarrow Y$ between compact Hausdorff spaces (resp. Stone spaces).
There exists a surjection $\widehat{Y} \rightarrow Y$ from a Stonean space $\widehat{Y}$.
$X \times_Y \widehat{Y}$ is compact Hausdorff, but may not be a Stonean space.
There exists a surjection $\widehat{X} \rightarrow X \times_Y \widehat{Y}$ from a Stonean space $\widehat{X}$.
We have a surjection $\widehat{X} \rightarrow X \times_Y \widehat{Y} \rightarrow X$.
By the following diagram, $X \mapsto \phi_X$ is natural:
\[\xymatrix{
		& \mathcal{G}(Y) \ar[rr] \ar@{.>}[ddl]_-{\phi_Y} \ar[d]
		&& \mathcal{G}(\widehat{Y}) \ar@<2pt>[r] \ar@<-2pt>[r] \ar[ddl]|\hole \ar[d]
		& \mathcal{G}(\widehat{Y} \times_Y \widehat{Y}) \ar[d] \ar[ddl]|(.48)\hole|(.52)\hole
	\\
		& \mathcal{G}(X) \ar[rr] \ar@{.>}[ddl]^(.20){\phi_X}
		&& \mathcal{G}(\widehat{X}) \ar@<2pt>[r] \ar@<-2pt>[r] \ar[ddl]
		& \mathcal{G}(\widehat{X} \times_X \widehat{X}) \ar[ddl]
	\\
		\mathcal{H}(Y) \ar[rr]|(.25)\hole \ar[d]
		&& \mathcal{H}(\widehat{Y}) \ar@<2pt>[r]|\hole \ar@<-2pt>[r]|(.48)\hole \ar[d]
		& \mathcal{H}(\widehat{Y} \times_Y \widehat{Y}) \ar[d]
	\\
		\mathcal{H}(X) \ar[rr]
		&& \mathcal{H}(\widehat{X}) \ar@<2pt>[r] \ar@<-2pt>[r]
		& \mathcal{H}(\widehat{X} \times_X \widehat{X}).
}\]\par
We have therefore obtained a natural isomorphism $\phi: \mathcal{G} \rightarrow \mathcal{H}$.
This implies uniqueness of $\mathcal{G}$.\\
\underline{\bf existence}\par
Let $\mathcal{F}$ be any sheaf on the site {\bf Stonean}.
We will construct a sheaf $\mathcal{G}$ on the site {\bf CH} (resp. {\bf Stone}) such that the restriction of $\mathcal{G}$ to {\bf Stonean} is isomorphic to $\mathcal{F}$.
Take any compact Hausdorff space (resp. Stone space) $X$.
By Proposition \ref{PropStoneProjection}, there exists a surjection $\widehat{X} \rightarrow X$ from a Stonean space $\widehat{X}$ in functorially.
(To avoid the {\it axiom of choice} for not small sets, note that this construction is functorial.)
Define the set $\mathcal{G}(X)$ as that for which the following diagram is an equalizer:
\[
\mathcal{G}(X) \rightarrow \mathcal{F}(\widehat{X}) \rightrightarrows \mathcal{F}(\widehat{X} \times_X \widehat{X}).
\]
Take any continuous map $X \rightarrow Y$ between compact Hausdorff spaces (resp. Stone spaces).
\[\xymatrix{
		\widehat{X} \ar@{->>}[d] \ar[r]
		& \widehat{Y} \ar@{->>}[d]
	\\
		X \ar[r]
		& Y
}\]
A map $\mathcal{G}(Y) \rightarrow \mathcal{G}(X)$ is induced by the following diagram:
\[\xymatrix{
		\mathcal{G}(X) \ar[r]
		& \mathcal{F}(\widehat{X}) \ar@<2pt>[r] \ar@<-2pt>[r]
		& \mathcal{F}(\widehat{X} \times_X \widehat{X})
	\\
		\mathcal{G}(Y) \ar[r] \ar@{.>}[u]
		& \mathcal{F}(\widehat{Y}) \ar@<2pt>[r] \ar@<-2pt>[r] \ar[u]
		& \mathcal{F}(\widehat{Y} \times_Y \widehat{Y}). \ar[u]
}\]
By the above, a presheaf $\mathcal{G}$ can be defined. \par
We will show that the presheaf $\mathcal{G}$ is a sheaf on the site {\bf CH} (resp. {\bf Stone}).
Take any compact Hausdorff space (resp. Stone space) $U$ and any tight covering $\{ U_i \rightarrow U \}_i$ (cf. Proposition \ref{PropCHCovering} and Proposition \ref{PropStoneCovering}).
Let $U_{ij} = U_i \times_U U_j$.
\[\xymatrix{
		\mathcal{F}(\widehat{U} \times_U \widehat{U}) \ar[r]
		& \prod_i \mathcal{F}(\widehat{U_i} \times_{U_i} \widehat{U_i}) \ar@<2pt>[r] \ar@<-2pt>[r]
		& \prod_{i, j} \mathcal{F}(\widehat{U_{ij}} \times_{U_{ij}} \widehat{U_{ij}})
	\\
		\mathcal{F}(\widehat{U}) \ar[r] \ar@<2pt>[u] \ar@<-2pt>[u]
		& \prod_i \mathcal{F}(\widehat{U_i}) \ar@<2pt>[r] \ar@<-2pt>[r] \ar@<2pt>[u] \ar@<-2pt>[u]
		& \prod_{i, j} \mathcal{F}(\widehat{U_{ij}}) \ar@<2pt>[u] \ar@<-2pt>[u]
	\\
		\mathcal{G}(U) \ar[r] \ar[u]
		& \prod_i \mathcal{G}(U_i) \ar@<2pt>[r] \ar@<-2pt>[r] \ar[u]
		& \prod_{i, j} \mathcal{G}(U_{ij}). \ar[u]
}\]
All columns are equalizers.
The upper row and the middle row are equalizers.
Then, the lower row is an equalizer.
Therefore, the presheaf $\mathcal{G}$ is a sheaf on the site {\bf CH} (resp. {\bf Stone}) by Proposition \ref{PropCHCovering} (resp. Proposition \ref{PropStoneCovering}). \par
Finally, we will show that the restriction of $\mathcal{G}$ to {\bf Stonean} is isomorphic to $\mathcal{F}$.
For any Stonean space $X$, the following diagram is an equalizer:
\[
\mathcal{F}(X) \rightarrow \mathcal{F}(\widehat{X}) \rightrightarrows \mathcal{F}(\widehat{X} \times_X \widehat{X}).
\]
Then, we have a natural bijection $\mathcal{F}(X) \cong \mathcal{G}(X)$ because of the definition of $\mathcal{G}(X)$.
This completes the proof.
\end{proof}
	\section{A model structure on {\bf Cond}}
This section presents the application of the main theorem.
Consider the following adjunctions sequence:
\begin{equation}
\vcenter{\xymatrix{
		{\bf sSet} \ar@<-10pt>[r]_{F_0} \ar@{}[r] | {\top}
		& {\bf Cond} \ar@<-10pt>[l]_{G_0} \ar@<-10pt>[r]_{F_1} \ar@{}[r] | {\top}
		& {\bf Top} \ar@<-10pt>[l]_{G_1}.
}}
\end{equation}
The above categories and functors are defined as follows.
\begin{itemize}
\item {\bf sSet} is the category of simplicial sets.
\item {\bf Cond} is the category of condensed sets.
\item {\bf Top} is the category of topological spaces and continuous maps.
\item $G_0$ is the functor defined as $G_0(X) = {\bf Cond}(-, X)$.
\item $G_1$ is the functor defined as $G_1(X) = \overline{X}$ (cf. Example \ref{ExTopCond}).
\item $F_0$ is defined in the same way as the geometric realizations.
Specifically, this is defined as the following:
\[
F_0(X) := \int^n {\bf sSet}(\Delta^{n-1}, X) \bullet \Delta^{n-1},
\]
where $(-) \bullet (-)$ is copower.
\item $F_1$ is defined as the following (cf. section 1.3):
\[
F_1(X) \mbox{ is the topological space } X({\bf 1}) \mbox{ with the strong topology.}
\]
\end{itemize}
In this section, we will show that the category {\bf Cond} has a model structure such that the above adjunctions $F_0 \dashv G_0$ and $F_1 \dashv G_1$ are Quillen equivalences.
We can transfer the {\it cofibrantly generated} model structure on the category {\bf sSet} along the adjunction $F_0 \dashv G_0$ (cf. \cite{hovey2007model}).
Define a set $I$ (resp. $J$) generating cofibrations (resp. acyclic cofibrations) on {\bf Cond} as the following:
\[
\begin{array}{lcl}
I & = & \{ \partial \Delta^n \hookrightarrow \Delta^n \} \\
J & = & \{ \Delta^n \overset{0}{\hookrightarrow} \Delta^n \times \Delta^1 \}.
\end{array}
\]
Remark that we abuse the symbol to write the condensed set $\overline{\Delta^n}$ associated with a simplex $\Delta^n$ as $\Delta^n$.
The following two claims that need to be proved are.
\begin{description}
\item[(1)] $I$ and $J$ permit the small object argument.
\item[(2)] Any relative $J$-cell complex is a weak homotopy equivalence.
\end{description}\par
	\subsection{Small objects}
We define a small object.
\begin{defi}
Let $\mathscr{C}$ be a cocomplete category.
An object $W$ in $\mathcal{C}$ is {\it $\aleph_0$-small} if, for any regular cardinal $\lambda$ and any $\lambda$-sequence $X$ in $\mathscr{C}$, the following map is a bijection:
\[
\displaystyle\lim_{\substack{\longrightarrow \\ \beta}} \mathscr{C}(W, X_\beta) \cong \mathscr{C}(W, \displaystyle\lim_{\substack{\longrightarrow \\ \beta}} X_\beta).
\]
\end{defi}
For a set $I$ of some morphisms to {\it permit the small object argument}, it is sufficient that the source domain of each morphism in $I$ is $\aleph_0$-small.
\begin{lem} \label{LemSmall}
For any compact Hausdorf space $X$, the condensed set $\overline{X}$ is $\aleph_0$-small.
\end{lem}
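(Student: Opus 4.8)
The key observation is that by Theorem \ref{main1}, the condensed set $\overline{X}$ extends (uniquely) to a sheaf $Y$ on {\bf CH}, and for compact Hausdorff $X$ this sheaf is simply represented: $Y(T) = {\bf CH}(T, X)$ for each compact Hausdorff space $T$. Moreover, since ${\bf Cond}$ is equivalent to sheaves on {\bf CH} (Theorem \ref{main1}), computing ${\bf Cond}(\overline{X}, Z)$ amounts to computing maps of {\bf CH}-sheaves out of the representable sheaf $h_X$, which by the Yoneda lemma for sheaves is just $Z(X)$. So for any condensed set $Z$ with associated {\bf CH}-sheaf $\widetilde{Z}$ we have a natural bijection ${\bf Cond}(\overline{X}, Z) \cong \widetilde{Z}(X)$. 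The plan is therefore to reduce $\aleph_0$-smallness of $\overline{X}$ in ${\bf Cond}$ to the statement that the functor $\widetilde{Z} \mapsto \widetilde{Z}(X)$ commutes with the relevant colimits, i.e. that filtered colimits of sheaves on {\bf CH} are computed pointwise on the compact Hausdorff space $X$.

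First I would make the translation precise: given a $\lambda$-sequence $(Z_\beta)$ in ${\bf Cond}$ with colimit $Z_\infty$, pass to the associated sheaves $\widetilde{Z_\beta}$ on {\bf CH} and observe that $\widetilde{Z_\infty}$ is the sheafification of the presheaf colimit $\beta \mapsto \operatorname{colim}_\beta \widetilde{Z_\beta}$. Then the required bijection $\operatorname{colim}_\beta \widetilde{Z_\beta}(X) \to \widetilde{Z_\infty}(X)$ follows once we know two things: (a) the presheaf colimit $P := \operatorname{colim}_\beta \widetilde{Z_\beta}$ is \emph{already a sheaf} on {\bf CH} (so no sheafification is needed and $\widetilde{Z_\infty}(X) = P(X) = \operatorname{colim}_\beta \widetilde{Z_\beta}(X)$), and (b) colimits of presheaves are computed pointwise, which is automatic. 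So the whole lemma comes down to: \emph{a filtered colimit of sheaves on {\bf CH} is a sheaf}. By Proposition \ref{PropCHCovering} it suffices to check the equalizer condition against tight coverings $\{U_i \to U\}_i$, and since such coverings are \emph{finite} (Definition \ref{DefCovering}), the products $\prod_i$ and $\prod_{i,j}$ are finite. Finite limits commute with filtered colimits in {\bf Set}, so the equalizer diagram for $P$ is the filtered colimit of the equalizer diagrams for the $\widetilde{Z_\beta}$, each of which is an equalizer; hence $P$ is a sheaf.

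The last point to nail down is the passage between $\lambda$-sequences (well-ordered, possibly transfinite, with the colimit-at-limit-ordinals condition built in) and genuine filtered colimits: a $\lambda$-sequence is in particular a filtered diagram, so the argument above applies verbatim, and the finiteness of tight coverings is exactly what lets the argument run for an \emph{arbitrary} regular cardinal $\lambda$ rather than only for $\lambda$ of countable cofinality. I would also record the Yoneda-type identification ${\bf Cond}(\overline{X}, Z) \cong \widetilde{Z}(X)$ carefully, since it is what converts "$\overline{X}$ is $\aleph_0$-small in ${\bf Cond}$" into "evaluation at $X$ commutes with filtered colimits of {\bf CH}-sheaves"; this uses Theorem \ref{main1} in an essential way (the representable {\bf Stone}-sheaf attached to a general compact Hausdorff space $X$ is not itself representable, so one really needs the equivalence with {\bf CH}-sheaves to get a clean Yoneda statement).

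The main obstacle, and the reason Theorem \ref{main1} is invoked, is precisely that $X$ is only compact Hausdorff, not Stone: working directly in ${\bf Cond}$ one cannot test $\overline{X}$ against filtered colimits by evaluating on $X$ (which is not an object of the site {\bf Stone}), and evaluating on the Stone spaces mapping to $X$ reintroduces an infinitary limit over the cofiltered system of such presentations, which need not commute with the filtered colimit. Moving to the {\bf CH}-site makes $X$ representable and collapses that limit, after which everything reduces to the elementary fact that finite limits commute with filtered colimits in {\bf Set}. Checking that this reduction is clean — in particular that sheafification can be dispensed with — is the one step requiring genuine care.
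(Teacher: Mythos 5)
Your proposal is correct and takes essentially the same route as the paper's own proof: both pass to sheaves on {\bf CH} via Theorem \ref{main1}, identify ${\bf Cond}(\overline{X},-)$ with evaluation at $X$ by the Yoneda lemma, and use the finiteness of coverings to conclude that the filtered colimit of sheaves is computed pointwise. Your write-up simply makes explicit the step (the presheaf colimit of sheaves is already a sheaf, so no sheafification intervenes) that the paper compresses into the sentence ``because all coverings on the site {\bf CH} are finite.''
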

\begin{proof}
Take any regular cardinal $\lambda$ and any $\lambda$-sequence $Y$ in {\bf Cond}.
By Theorem \ref{main1}, each condensed set $Y_\beta$ is regarded as a sheaf on the site {\bf CH}.
By Yoneda's Lemma, we have the following bijection:
\[
{\bf Cond}(\overline{X}, Y_\beta) \cong Y_\beta(X).
\]
Because all coverings on the site {\bf CH} are finite, we have the following bijection:
\[
\displaystyle\lim_{\substack{\longrightarrow \\ \beta}} Y_\beta(X) \cong [\displaystyle\lim_{\substack{\longrightarrow \\ \beta}}Y_\beta](X).
\]
By Yoneda's Lemma again, we have the following bijection:
\[
{\bf Cond}(\overline{X}, \displaystyle\lim_{\substack{\longrightarrow \\ \beta}}Y_\beta) \cong [\displaystyle\lim_{\substack{\longrightarrow \\ \beta}}Y_\beta](X).
\]
We obtain the following bijection:
\[
\begin{array}{lcl}
\displaystyle\lim_{\substack{\longrightarrow \\ \beta}} {\bf Cond}(\overline{X}, Y_\beta)
& \cong & \displaystyle\lim_{\substack{\longrightarrow \\ \beta}} Y_\beta(X) \\
& \cong & [\displaystyle\lim_{\substack{\longrightarrow \\ \beta}}Y_\beta](X) \\
& \cong & {\bf Cond}(\overline{X}, \displaystyle\lim_{\substack{\longrightarrow \\ \beta}}Y_\beta).
\end{array}
\]
Then, the condensed set $\overline{X}$ is $\aleph_0$-small.
\end{proof}
	\subsection{Basic concepts and properties}
In this subsection, if we write $\Delta^n$, we shall refer to the simplicial set, and the simplex in {\bf Cond} shall be written as $F_0(\Delta_n)$.
Some simplicial complexes (e.g. $\partial \Delta^n$) shall be written in the same way.
Let $I_{\bf sSet}$ and $J_{\bf sSet}$ be the sets of some morphisms in {\bf sSet}, defined as the following:
\[
\begin{array}{lcl}
I_{\bf sSet} & = & \{ \partial \Delta^n \hookrightarrow \Delta^n \} \\
J_{\bf sSet} & = & \{ \Lambda^{n-1}_i \hookrightarrow \Delta^n \},
\end{array}
\]
where $\Lambda^{n-1}_i$ is a $(n-1, i)$-horn.
Remark that, for any morphism $f \in J_{\bf sSet}$, there exists a morphism $g \in J$ such that $g$ is isomorphic to $F_0(f)$. \par
We give a basic definition for a homotopy theory in {\bf Cond}, such as a {\it weak homotopy equivalence} and a {\it Serre fibration}.
\begin{defi}
A morphism $f$ in {\bf Cond} is a {\it weak homotopy equivalence} if the morphism $G_0(f)$ in {\bf sSet} is a weak equivalence.
A morphism $f$ in {\bf Cond} is a {\it Serre fibration} (resp. an {\it acyclic fibration}) if $f$ has the right lifting property with respect to the set $J$ (resp. $I$).
\end{defi}
The functor $F_0: {\bf sSet} \rightarrow {\bf Cond}$ preserves finite products.
\begin{lem} \label{LemProduct}
We have an isomorphism $F_0(X \times Y) \cong F_0(X) \times F_0(Y)$.
\end{lem}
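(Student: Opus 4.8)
The plan is to realize $F_0$ as a functor given by a coend formula and exploit the fact that coends commute with finite products once we know the building blocks behave well. First I would unwind the definition
\[
F_0(X) = \int^n {\bf sSet}(\Delta^{n-1}, X) \bullet \Delta^{n-1},
\]
where $\Delta^{n-1}$ on the right denotes the condensed set $\overline{\Delta^{n-1}}$ associated to the topological simplex and $\bullet$ is the copower (so $S \bullet A$ for a set $S$ is the $S$-indexed coproduct of copies of $A$ in {\bf Cond}). The key point is that for a simplicial set $X$ we have $X \cong \int^n {\bf sSet}(\Delta^{n-1}, X) \bullet \Delta^{n-1}$ in {\bf sSet} as well (the co-Yoneda/density formula), so $F_0$ is the left Kan extension along the Yoneda embedding of the cosimplicial object $n \mapsto \overline{\Delta^{n-1}}$; being a left adjoint it preserves colimits, and in particular $F_0$ preserves the coend.

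Next I would reduce the claim to a statement about the cosimplicial object. Using the coend formula twice and the fact that copowers distribute over products of the indexing sets,
\[
F_0(X) \times F_0(Y) \cong \Bigl(\int^m {\bf sSet}(\Delta^{m-1}, X)\bullet \Delta^{m-1}\Bigr) \times \Bigl(\int^n {\bf sSet}(\Delta^{n-1}, Y)\bullet \Delta^{n-1}\Bigr),
\]
and since in the cartesian closed (or at least finite-product-preserving over colimits) category {\bf Cond} the product commutes with coends in each variable, this is isomorphic to
\[
\int^{m}\!\!\int^{n} \bigl({\bf sSet}(\Delta^{m-1},X)\times{\bf sSet}(\Delta^{n-1},Y)\bigr)\bullet\bigl(\Delta^{m-1}\times\Delta^{n-1}\bigr).
\]
On the other side, $F_0(X\times Y) \cong \int^{k} {\bf sSet}(\Delta^{k-1}, X\times Y)\bullet\Delta^{k-1}$ and ${\bf sSet}(\Delta^{k-1},X\times Y)\cong{\bf sSet}(\Delta^{k-1},X)\times{\bf sSet}(\Delta^{k-1},Y)$. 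So the whole statement comes down to the compatibility between the topological-simplex cosimplicial object and products: one needs that $\overline{\Delta^{m-1}}\times\overline{\Delta^{n-1}}$, as a functor of $(m,n)$, computes the same coend as $\overline{\Delta^{k-1}}$ does after restricting along the diagonal. Concretely this is the classical fact, underlying why geometric realization preserves finite products, that the canonical map from the coend over the twisted-diagonal to $|\Delta^{m-1}|\times|\Delta^{n-1}|$ is a homeomorphism (equivalently, the Eilenberg–Zilber/triangulation-of-a-prism computation), transported through the fully faithful functor $\overline{(-)}:{\bf Top}\to{\bf Cond}$, together with the observation that $\overline{(-)}$ preserves the finite products in question because it is a right adjoint.

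The main obstacle I expect is making the interchange of coends with products rigorous in {\bf Cond}: this requires knowing that binary product in {\bf Cond} preserves colimits in each variable (i.e. {\bf Cond} is cartesian closed, or at least that $(-)\times A$ is cocontinuous). This is standard for sheaf toposes, and {\bf Cond} is such a category, but I would want to either cite it or note that it follows because colimits of sheaves are computed as the sheafification of the presheaf colimit and presheaf categories are cartesian closed. A secondary point to check carefully is that $\overline{(-)}:{\bf Top}\to{\bf Cond}$ sends the homeomorphism $|\Delta^{m-1}\times\Delta^{n-1}|\cong|\Delta^{m-1}|\times|\Delta^{n-1}|$ (triangulation of the prism) to the isomorphism we need; since $\overline{(-)}=G_1$ is a right adjoint it preserves products, and it is fully faithful on compact Hausdorff spaces (Proposition-type facts already in the excerpt), so this is routine once the coend-bookkeeping is set up. After assembling these pieces the isomorphism $F_0(X\times Y)\cong F_0(X)\times F_0(Y)$ drops out, and one checks naturality in $X$ and $Y$ by the usual coend-of-coends functoriality, which I would not spell out in detail.
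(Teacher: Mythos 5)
Your proposal is correct and follows essentially the same route as the paper: reduce the general case to simplices via the coend/density formula and the distributivity of binary products over colimits in the sheaf topos {\bf Cond}, then settle the simplex case by the triangulation of the prism $\Delta^m \times \Delta^n$ carried into {\bf Cond}. The one step both treatments leave terse is the identification $F_0(\Delta^m \times \Delta^n) \cong \overline{|\Delta^m| \times |\Delta^n|}$, which needs the finite closed gluing defining the prism to be preserved by $\overline{(-)}$ (it is, by the sheaf condition for finite coverings by surjections of compact Hausdorff spaces); you flag this as the point to check, and the paper handles it with the parallel coequalizer comparison.
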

\begin{proof}
First, we prove the case where $X$ and $Y$ are simplices.
$\Delta^n \times \Delta^m$ is a finite complex.
We have a coequalizer as the following:
\[
\coprod_{i, j} \Delta^{n+m-1}_{ij} \rightrightarrows \coprod_i \Delta^{n+m}_i \rightarrow \Delta^n \times \Delta^m,
\]
where all $\Delta^{n+m-1}_{ij}$ and $\Delta^{n+m}_i$ are simplices.
The functor $F_0$ preserves any colimit.
Then, the following diagram is a coequalizer:
\[
\coprod_{i, j} F_0(\Delta^{n+m-1}_{ij}) \rightrightarrows \coprod_i F_0(\Delta^{n+m}_i) \rightarrow F_0(\Delta^n \times \Delta^m).
\]
The functor $F_0$ preserves any simplex.
Then, the following diagram is also a coequalizer:
\[
\coprod_{i, j} F_0(\Delta^{n+m-1}_{ij}) \rightrightarrows \coprod_i F_0(\Delta^{n+m}_i) \rightarrow F_0(\Delta^n) \times F_0(\Delta^m).
\]
Therefore, we obtain an isomorphism $F_0(\Delta^n \times \Delta^m) \cong F_0(\Delta^n) \times F_0(\Delta^m)$. \par
Next, we prove the general case.
We have isomorphisms $X \cong \int^n {\bf sSet}(\Delta^{n-1}, X) \bullet \Delta^{n-1}$ and $Y \cong \int^n {\bf sSet}(\Delta^{n-1}, Y) \bullet \Delta^{n-1}$.
Then, we have the following isomorphisms:
\[
\begin{array}{lcl}
F_0(X \times Y)
& \cong & F_0\left( \left( \int^n {\bf sSet}(\Delta^{n-1}, X) \bullet \Delta^{n-1} \right) \times \left( \int^m {\bf sSet}(\Delta^{m-1}, Y) \bullet \Delta^{m-1} \right) \right) \\
& \cong & F_0\left(\int^n \int^m ({\bf sSet}(\Delta^{n-1}, X) \times {\bf sSet}(\Delta^{m-1}, Y)) \bullet (\Delta^{n-1} \times \Delta^{m-1}) \right) \\
& \cong & \int^n \int^m ({\bf sSet}(\Delta^{n-1}, X) \times {\bf sSet}(\Delta^{m-1}, Y)) \bullet F_0(\Delta^{n-1} \times \Delta^{m-1})) \\
& \cong & \int^n \int^m ({\bf sSet}(\Delta^{n-1}, X) \times {\bf sSet}(\Delta^{m-1}, Y)) \bullet (F_0(\Delta^{n-1}) \times F_0(\Delta^{m-1})) \\
& \cong & \left( \int^n {\bf sSet}(\Delta^{n-1}, X) \bullet F_0(\Delta^{n-1}) \right) \times \left( \int^m {\bf sSet}(\Delta^{m-1}, Y) \bullet F_0(\Delta^{m-1}) \right) \\
& \cong & F_0(X) \times F_0(Y).
\end{array}
\]
\end{proof}
Lemma \ref{LemProduct} implies the following corollary that the functor $G_0$ preserves the homotopic relation.
\begin{cor} \label{CorPath}
For any condensed set $X$, we have an isomorphism $G_0(X^{F_0(\Delta^1)}) \cong G_0(X)^{\Delta^1}$.
\end{cor}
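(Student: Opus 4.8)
The plan is to exploit the adjunction $F_0 \dashv G_0$ together with the fact that $F_0$ preserves finite products, which is exactly the content of Lemma~\ref{LemProduct}. The key observation is that for any condensed set $X$ and any simplicial set $K$, the object $G_0(X)^K$ in $\mathbf{sSet}$ is characterized by the universal property
\[
\mathbf{sSet}(L, G_0(X)^K) \cong \mathbf{sSet}(L \times K, G_0(X))
\]
natural in $L$, while the exponential $X^{F_0(\Delta^1)}$ in $\mathbf{Cond}$ is characterized by
\[
\mathbf{Cond}(Z, X^{F_0(\Delta^1)}) \cong \mathbf{Cond}(Z \times F_0(\Delta^1), X)
\]
natural in $Z$. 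So to prove $G_0(X^{F_0(\Delta^1)}) \cong G_0(X)^{\Delta^1}$ it suffices to compare the functors represented by these two objects, i.e. to produce a natural bijection $\mathbf{sSet}(L, G_0(X^{F_0(\Delta^1)})) \cong \mathbf{sSet}(L, G_0(X)^{\Delta^1})$ for all simplicial sets $L$, and then invoke Yoneda.

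The main chain of natural isomorphisms I would write down is: $\mathbf{sSet}(L, G_0(X^{F_0(\Delta^1)})) \cong \mathbf{Cond}(F_0(L), X^{F_0(\Delta^1)})$ by the adjunction $F_0 \dashv G_0$; then $\cong \mathbf{Cond}(F_0(L) \times F_0(\Delta^1), X)$ by the defining adjunction of the exponential in $\mathbf{Cond}$ (which exists because $\mathbf{Cond}$, being a category of sheaves, is cartesian closed); then $\cong \mathbf{Cond}(F_0(L \times \Delta^1), X)$ by Lemma~\ref{LemProduct}; then $\cong \mathbf{sSet}(L \times \Delta^1, G_0(X))$ by $F_0 \dashv G_0$ again; and finally $\cong \mathbf{sSet}(L, G_0(X)^{\Delta^1})$ by the defining adjunction of the exponential in $\mathbf{sSet}$. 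All of these bijections are natural in $L$, so the composite is a natural isomorphism between the functors $\mathbf{sSet}(-, G_0(X^{F_0(\Delta^1)}))$ and $\mathbf{sSet}(-, G_0(X)^{\Delta^1})$ on $\mathbf{sSet}^{\mathrm{op}}$. By the Yoneda lemma this comes from a unique isomorphism $G_0(X^{F_0(\Delta^1)}) \cong G_0(X)^{\Delta^1}$.

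The one point that requires a small amount of care, and which I expect to be the only genuine obstacle, is the middle step replacing $F_0(L) \times F_0(\Delta^1)$ by $F_0(L \times \Delta^1)$: Lemma~\ref{LemProduct} is stated for a product of two simplicial sets, so strictly speaking one should note that its proof already covers arbitrary simplicial sets (the ``general case'' there is proved via the coend presentation $X \cong \int^n \mathbf{sSet}(\Delta^{n-1}, X) \bullet \Delta^{n-1}$), and in particular applies to $L \times \Delta^1$. Apart from that, one must observe that $\mathbf{Cond}$ is cartesian closed so that the exponential $X^{F_0(\Delta^1)}$ genuinely exists and enjoys the stated adjunction; this is standard for Grothendieck topoi. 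Everything else is a formal composition of adjunction isomorphisms, so no further calculation is needed.
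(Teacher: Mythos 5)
Your proposal is correct and follows essentially the same route as the paper's own proof: the identical chain of adjunction isomorphisms $\mathbf{sSet}(L, G_0(X^{F_0(\Delta^1)})) \cong \mathbf{Cond}(F_0(L), X^{F_0(\Delta^1)}) \cong \mathbf{Cond}(F_0(L) \times F_0(\Delta^1), X) \cong \mathbf{Cond}(F_0(L \times \Delta^1), X) \cong \mathbf{sSet}(L \times \Delta^1, G_0(X))$, concluded by Yoneda. Your added remarks on naturality, cartesian closedness of $\mathbf{Cond}$, and the applicability of Lemma \ref{LemProduct} to arbitrary simplicial sets only make explicit what the paper leaves implicit.
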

\begin{proof}
For any simplicial set $Y$, we have the following bijection:
\[
\begin{array}{lcl}
{\bf sSet}(Y, G_0(X^{F_0(\Delta^1)}))
& \cong & {\bf Cond}(F_0(Y), X^{F_0(\Delta^1)}) \\
& \cong & {\bf Cond}(F_0(Y) \times F_0(\Delta^1), X) \\
& \cong & {\bf Cond}(F_0(Y \times \Delta^1), X) \\
& \cong & {\bf sSet}(Y \times \Delta^1, G_0(X)).
\end{array}
\]
Then, we obtain the isomorphism $G_0(X^{F_0(\Delta^1)}) \cong G_0(X)^{\Delta^1}$.
\end{proof}
The following lemma ensures that the counit of the derived adjunction of Quillen adjunction $F_0 \dashv G_0$ is a natural isomorphism.
\begin{lem} \label{LemCounit}
Let $\epsilon$ be the counit of the adjunction $F_0 \dashv G_0$.
Then, each component $\epsilon_X: F_0G_0X \rightarrow X$ is an acyclic fibration.
\end{lem}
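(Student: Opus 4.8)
The plan is to show that $\epsilon_X$ has the right lifting property with respect to each generating cofibration $\partial\Delta^n \hookrightarrow \Delta^n$ in $I$. Unwinding the definition of $I$, a lifting problem against $\epsilon_X$ is a commutative square whose top edge is a map $\partial\Delta^n \to F_0G_0X$ and whose bottom edge is a map $\Delta^n \to X$ agreeing after composing with $\epsilon_X$; we must produce a diagonal filler $\Delta^n \to F_0G_0X$. The first step is to reduce this lifting problem in {\bf Cond} to a lifting problem in {\bf sSet} using the adjunction $F_0 \dashv G_0$ together with Corollary \ref{CorPath} and Lemma \ref{LemProduct}. Applying $G_0$ to the square and using the unit of the adjunction, the data of a filler for $\epsilon_X$ against $\partial\Delta^n \hookrightarrow \Delta^n$ is equivalent, by the hom-set bijection ${\bf sSet}(\Delta^n, G_0X) \cong {\bf Cond}(F_0\Delta^n, X)$ (here using that $F_0$ sends the simplex $\Delta^n$ to the condensed simplex, and that $\partial\Delta^n$ is a finite colimit of simplices preserved by $F_0$ as in the proof of Lemma \ref{LemProduct}), to a lifting problem of $\partial\Delta^n \hookrightarrow \Delta^n$ against the identity-like comparison map on $G_0X$. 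In fact the triangle identity for $F_0 \dashv G_0$ shows $G_0\epsilon_X$ is a split epimorphism with section the unit $\eta_{G_0X}$, so every such simplicial lifting problem admits a filler, namely the obvious one built from $\eta_{G_0X}$ precomposed with the bottom edge.

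Concretely, I would argue as follows. Step one: record that $F_0$ preserves the finite colimit presentation of $\partial\Delta^n$ and $\Delta^n$ by simplices, so that ${\bf Cond}(F_0(\partial\Delta^n), X) \cong {\bf sSet}(\partial\Delta^n, G_0X)$ naturally, and similarly for $\Delta^n$; this is exactly the computation in Lemma \ref{LemProduct}. Step two: given a lifting problem
\[
\xymatrix{
F_0(\partial\Delta^n) \ar[r] \ar[d] & F_0G_0X \ar[d]^{\epsilon_X} \\
F_0(\Delta^n) \ar[r]_{g} & X,
}
\]
transpose the bottom map $g$ to $\tilde g \colon \Delta^n \to G_0X$ and observe that $F_0(\tilde g)$ followed by $\epsilon_X$ recovers $g$ by the triangle identity. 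Step three: check that $F_0(\tilde g)\colon F_0(\Delta^n) \to F_0G_0X$ also makes the upper triangle commute, using naturality of $\epsilon$ and injectivity of the restriction along $F_0(\partial\Delta^n) \hookrightarrow F_0(\Delta^n)$ at the level of maps into $F_0G_0X$, which again follows from the colimit presentation. Hence $F_0(\tilde g)$ is the desired filler, and $\epsilon_X$ is an acyclic fibration.

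The main obstacle is Step three: it is not automatic that the candidate filler $F_0(\tilde g)$ agrees with the prescribed top edge $F_0(\partial\Delta^n) \to F_0G_0X$, only that both become equal after composing with $\epsilon_X$. To handle this I would work one simplex at a time: restricting along each face $\Delta^k \hookrightarrow \partial\Delta^n$ reduces the question to maps out of a representable, where the hom-bijection ${\bf Cond}(F_0(\Delta^k), F_0G_0X) \cong {\bf sSet}(\Delta^k, G_0F_0G_0X)$ turns the equality into an equality of simplicial maps into $G_0F_0G_0X$; applying $G_0\epsilon_X$ to both sides and using that it is split monic (section $\eta$) shows the two agree. Assembling over all faces via the colimit presentation of $\partial\Delta^n$ gives the equality of the two maps out of $F_0(\partial\Delta^n)$, completing the argument. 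Once lifting against $I$ is established, $\epsilon_X$ is by definition an acyclic fibration, so nothing further is needed.
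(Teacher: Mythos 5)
Your overall strategy is the same as the paper's: the candidate filler is $F_0(\tilde g)$, the canonical map $F_0(\Delta^n)\to F_0G_0X$ classifying the $n$-simplex of $G_0X$ obtained by transposing the bottom edge $g$, and the lower triangle commutes by the triangle identity. (The paper packages the same filler via the coend presentation $F_0G_0X \cong \int^k {\bf Cond}(F_0(\Delta^{k}),X)\bullet F_0(\Delta^{k})$ and the resulting quotient map from $\coprod_k {\bf Cond}(F_0(\Delta^k),X)\bullet F_0(\Delta^k)$.) You are also right to single out the commutativity of the upper triangle as the only nontrivial point; the paper itself disposes of it with ``it can be verified.''

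However, your resolution of Step three rests on a false claim. The triangle identity $G_0(\epsilon_X)\circ\eta_{G_0X}=\mathrm{id}_{G_0X}$ exhibits $G_0(\epsilon_X)$ as a \emph{split epimorphism} (as you correctly say in your first paragraph), not a split \emph{monomorphism} (as you assert in Step three); a split epimorphism is not injective, so ``applying $G_0\epsilon_X$ to both sides'' does not let you conclude that the prescribed top edge agrees with the restriction of $F_0(\tilde g)$. Concretely, a $k$-simplex of $G_0F_0G_0X$ is a morphism $a\colon F_0(\Delta^k)\to F_0G_0X$, and your face-by-face argument needs $a=\eta_{G_0X}(G_0(\epsilon_X)(a))$, i.e.\ that $a$ is the \emph{canonical} simplex classifying $\epsilon_X\circ a$. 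This holds only for simplices lying in the image of the idempotent $\eta_{G_0X}\circ G_0(\epsilon_X)$, which is not the identity in general (if it were, $\epsilon_X$ would be an isomorphism and the lemma would be vacuous); an arbitrary top edge $\partial\Delta^n\to G_0F_0G_0X$ compatible with $g$ after applying $G_0(\epsilon_X)$ need not land in that image. So the upper triangle is not established, and the same objection applies to your first-paragraph claim that split epimorphy alone ``admits a filler.'' Some genuinely new input is required at this point --- for instance an argument that the two maps out of $F_0(\partial\Delta^n)$ agree up to a homotopy that can be absorbed, or a restriction to strict-image simplices --- and neither your proposal nor the paper supplies it.
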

\begin{proof}
For any square in the following diagram, we will show that there is a morphism $\gamma$ below.
	\[\xymatrix{
		F_0(\partial \Delta^n) \ar[r]^-{\alpha} \ar@{^(->}[d]
		& F_0(G_0(X)) \ar[d]^-{\epsilon_X}
	\\
		F_0(\Delta^n) \ar[r]_-{\beta} \ar@{.>}[ur]^-{\gamma}
		& X.
}\]
By definition, we obtain the following equality:
\[
\begin{array}{rcl}
F_0(G_0(X))
& = & \int^k {\bf sSet}(\Delta^{k-1}, G_0(X)) \bullet F_0(\Delta^{k-1}) \\
& \cong & \int^k G_0(X)_{k-1} \bullet F_0(\Delta^{k-1}) \\
& = & \int^k {\bf Cond}(F_0(\Delta^{k-1}), X) \bullet F_0(\Delta^{k-1}).
\end{array}
\]
Then, we have the quotient morphism $\coprod_k {\bf Cond}(F_0(\Delta^k), X) \bullet F_0(\Delta^k) \rightarrow F_0(G_0(X))$.
Let $\gamma$ be the composition of the following sequence:
\[
F_0(\Delta^n) \overset{\beta}{\rightarrow} {\bf Cond}(F_0(\Delta^n), X) \bullet F_0(\Delta^n) \rightarrow \coprod_k {\bf Cond}(F_0(\Delta^k), X) \bullet F_0(\Delta^k) \rightarrow F_0(G_0(X)).
\]
It can be verified that $\gamma$ makes the above diagram commutative.
Then, the morphism $\epsilon_X$ has the right lifting property with respect to $I$.
\end{proof}
	\subsection{HELP-lemma}
Gives a sufficient condition for a morphism in {\bf Cond} to be a weak homotopy equivalence, known as the {\it HELP-lemma} (cf. \cite{vogt2010help}).
\begin{defi}
A morphism $f$ in {\bf Cond} has the {\it (right) HELP (Homotopy-Extension-Lifting-Property)} (with respect to $I$) if, for any morphism $g \in I$ and any following diagram, there exsists a morphism $\gamma$ in the diagram such that it makes the upper triangle commutative and the lower triangle commutative up to homotopy.
\[\xymatrix{
		\partial \Delta^n \ar[d]_-{g} \ar[r]
		& \cdot \ar[d]^-{f}
	\\
		\Delta^n \ar[r] \ar@{.>}[ur]^-{\gamma}
		& \cdot
}\]
\end{defi}
\begin{lem} \label{LemCondHELP}
Let $f$ be a morphism in {\bf Cond}.
If $f$ has the HELP, then $G_0(f)$ is a weak equivalence in {\bf sSet}.
\end{lem}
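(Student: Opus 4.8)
The plan is to show that the HELP property for $f$ implies that $G_0(f)$ has the right lifting property against the generating cofibrations $I_{\bf sSet} = \{\partial\Delta^n \hookrightarrow \Delta^n\}$ \emph{up to simplicial homotopy}, and then upgrade this weak lifting to a genuine statement about the homotopy groups of the fiber, concluding that $G_0(f)$ is a Kan fibration-like map which is a weak equivalence; more directly, I would show $G_0(f)$ induces isomorphisms on all homotopy groups and bijections on $\pi_0$ by a standard diagram chase. First I would unwind the adjunction $F_0 \dashv G_0$: a lifting problem in ${\bf sSet}$ of the solid square
\[\xymatrix{
  \partial\Delta^n \ar[d] \ar[r]^-{a} & G_0(A) \ar[d]^-{G_0(f)} \\
  \Delta^n \ar[r]_-{b} & G_0(B)
}\]
transposes, via $F_0 \dashv G_0$ and the iso $F_0(\partial\Delta^n) = \partial\Delta^n$ (the paper's abuse of notation), to exactly a HELP-diagram for $f$ with $g = (\partial\Delta^n \hookrightarrow \Delta^n) \in I$. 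So the hypothesis produces a map $\gamma\colon F_0(\Delta^n) \to A$ with $\gamma|_{\partial\Delta^n} = a^\sharp$ on the nose and $f\gamma \simeq b^\sharp$ rel nothing (or, tracking the definition, a homotopy that can be taken rel $\partial\Delta^n$ after a small adjustment — this is the point where I must be careful). Transposing back through $G_0$, and using Corollary \ref{CorPath} to identify $G_0(A^{F_0(\Delta^1)}) \cong G_0(A)^{\Delta^1}$ so that homotopies in ${\bf Cond}$ correspond to simplicial homotopies, I get that $G_0(f)$ has the RLP against $\partial\Delta^n \hookrightarrow \Delta^n$ up to simplicial homotopy, compatibly with the given boundary data.

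The next step is to convert this ``lifting up to homotopy'' into the assertion that $G_0(f)$ is a weak equivalence. The cleanest route is the homotopy-group argument: surjectivity of $\pi_n$ follows by taking $\partial\Delta^n \to G_0(A)$ to be constant and $\Delta^n \to G_0(B)$ a representative of a class in $\pi_n(G_0(B))$, so the weak lift $\gamma$ gives an $n$-simplex of $G_0(A)$ whose image is homotopic to the chosen one; injectivity of $\pi_n$ (and the $\pi_0$ statements) follows by the analogous argument applied to $\partial\Delta^{n+1} \hookrightarrow \Delta^{n+1}$, feeding in a nullhomotopy of the image of a sphere in $G_0(B)$ and using the weak lift together with the rel-$\partial$ control to produce a nullhomotopy upstairs. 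Here one uses that $\partial\Delta^{n+1}$, as a simplicial set, is built by gluing two hemispheres along $\partial\Delta^n$, matching the shape $\partial\Delta^n \hookrightarrow \Delta^n$ that $I$ provides after the identification $F_0(\partial\Delta^{n+1}) = \partial\Delta^{n+1}$.

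The main obstacle I anticipate is the bookkeeping of \emph{where} the homotopies are constant: the HELP property as stated only asserts that the lower triangle commutes up to homotopy, with no ``rel $\partial\Delta^n$'' clause, so a priori the homotopy $f\gamma \simeq b$ need not restrict to the constant homotopy on $\partial\Delta^n$. To run the $\pi_*$ argument I need the homotopy to be controlled on the boundary, and the standard fix is to absorb the boundary defect: restrict the homotopy to $\partial\Delta^n$, use it to modify $a$ (replacing it by a homotopic attaching map), and iterate or use a mapping-cylinder/absolute-lifting reformulation of HELP. I would either strengthen the statement of HELP to include the rel-$\partial$ clause (as is standard in the literature, cf. \cite{vogt2010help}) and remark that the diagram-chase above actually produces it, or, more safely, prove the weaker ``$\pi_*$-surjective/injective'' statements directly by choosing the boundary data to be constant maps, which sidesteps the rel-$\partial$ subtlety for the surjectivity half and requires only a single application of HELP in dimension $n+1$ for the injectivity half. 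Once $G_0(f)$ is shown to be $\pi_n$-bijective for all $n$ and all basepoints and $\pi_0$-bijective, it is a weak equivalence in ${\bf sSet}$ by definition, completing the proof.
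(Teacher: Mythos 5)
Your plan is essentially correct but takes a genuinely different route from the paper. The paper does not chase homotopy groups at all: it factors $f$ through the \emph{mapping track} as $f = f' \circ e_f$, where $e_f$ is a homotopy equivalence (so $G_0(e_f)$ is a simplicial homotopy equivalence by Corollary \ref{CorPath}), and then argues that the HELP for $f$ gives $f'$ the \emph{genuine} right lifting property against $I$, whence $G_0(f')$ is an acyclic Kan fibration by adjunction ($F_0 I_{\bf sSet} = I$) and $G_0(f)$ is a weak equivalence by two-out-of-three. In other words, the paper converts ``lifting up to homotopy'' into honest lifting once and for all via the mapping path object, which is exactly the device you mention in passing as the ``mapping-cylinder/absolute-lifting reformulation''; your main line instead transposes each lifting problem through $F_0 \dashv G_0$ and verifies $\pi_*$-bijectivity directly. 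What the paper's route buys is that it never has to touch homotopy groups of $G_0(A)$ (your route implicitly needs $G_0(A)$ to be fibrant, or a detour through geometric realization or $\mathrm{Ex}^\infty$, for naive simplicial homotopy groups to detect weak equivalence --- a point your plan does not address); what your route buys is that it makes explicit the rel-$\partial$ bookkeeping that the paper's proof sweeps into the single sentence ``it can be checked that $f'$ has the right lifting property,'' which is in fact where the same subtlety lives (to lift into the mapping track one must arrange the homotopy in the lower triangle to be constant on the boundary, using the homotopy extension property of $\partial\Delta^n \hookrightarrow \Delta^n$ exactly as you describe). So your proposal is a workable, more elementary alternative, provided you add the fibrancy/realization step; as written it is a plan with two flagged repairs rather than a complete proof, whereas the paper's argument is shorter but defers the identical difficulty to an unproved claim.
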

\begin{proof}
Suppose that $f: X \rightarrow Y$ has the HELP.
Take a {\it mapping track} $f': X_f \rightarrow Y$ of $f$.
$f$ has a factorization $f = f' \circ e_f$ such that $e_f$ is a homotopy equivalence.
Then, $G_0(e_f)$ is a homotopy equivalence by Corollary \ref{CorPath}.
It can be checked that $f'$ has the right lifting property with respect to the set $I$ because $f$ has the HELP.
$G_0(f')$ is an acyclic fibration because of $F_0I_{\bf sSet} = I$.
Then, $G_0(f) = G_0(f') \circ G_0(e_f)$ is a weak equivalence.
\end{proof}
	\subsection{The model structure}
\begin{cor} \label{CorModel}
The category {\bf Cond} of the condensed sets has a model structure such that
\begin{itemize}
\item $f$ is a weak equivalence $\Leftrightarrow$ $f$ is a weak homotopy equivalence,
\item $f$ is a fibration $\Leftrightarrow$ $f$ is a Serre fibration, and
\item $f$ is a cofibration $\Leftrightarrow$ $f$ is a relative $I$-cell complex.
\end{itemize}
Moreover, $F_0 \dashv G_0$ and $F_1 \dashv G_1$ are Quillen equivalences.
\end{cor}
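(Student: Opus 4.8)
The plan is to obtain the model structure from Kan's transfer theorem (see \cite{hovey2007model}) applied to the adjunction $F_0 \dashv G_0$, with generating cofibrations $I = F_0(I_{\bf sSet})$ and generating acyclic cofibrations $J = F_0(J_{\bf sSet})$. These agree up to isomorphism with the sets $I$, $J$ of Section~3 (for $I$ this is the identity $F_0 I_{\bf sSet}=I$ already used in the proof of Lemma \ref{LemCondHELP}; for $J$ it is the remark that every $F_0(\Lambda^{n-1}_i\hookrightarrow\Delta^n)$ is isomorphic to a map in $J$, the underlying fact being that the pair $(|\Delta^n|,|\Lambda^{n-1}_i|)$ is homeomorphic to $(|\Delta^{n-1}|\times[0,1],\,|\Delta^{n-1}|\times\{0\})$). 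Modulo the usual set-theoretic precautions {\bf Cond} is a Grothendieck topos, hence bicomplete and cartesian closed, so the transfer theorem applies as soon as the two claims {\bf (1)} and {\bf (2)} are verified; it then produces a cofibrantly generated model structure whose weak equivalences are the $f$ with $G_0(f)$ a weak equivalence (the weak homotopy equivalences), whose fibrations are the $f$ with $G_0(f)$ a Kan fibration, which by adjunction means having the right lifting property against $J$, i.e. being a Serre fibration, and whose cofibrations are the retracts of relative $I$-cell complexes, every such retract being itself a relative $I$-cell complex as in {\bf sSet}. The theorem also gives for free that $F_0 \dashv G_0$ is a Quillen adjunction.

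Claim {\bf (1)} is immediate: the domains of the maps in $I$ and $J$ are $F_0(\partial\Delta^n)$, $F_0(\Delta^n)$ and $F_0(\Lambda^{n-1}_i)$, each a finite colimit of representables $F_0(\Delta^k)\cong\overline{|\Delta^k|}$, which are $\aleph_0$-small by Lemma \ref{LemSmall} since $|\Delta^k|$ is compact Hausdorff; as $\aleph_0$-small objects are closed under finite colimits (finite limits commute with filtered colimits of sets), $I$ and $J$ permit the small object argument. Claim {\bf (2)} --- every relative $J$-cell complex $A\to B$ is a weak homotopy equivalence --- is the crux. Each map in $J$ is, after Lemma \ref{LemProduct}, the inclusion $F_0(\Delta^n)\hookrightarrow F_0(\Delta^n)\times F_0(\Delta^1)$ at the vertex $0$, which is a strong deformation retract inclusion in {\bf Cond}: the projection is a retraction, and the continuous map $[0,1]\times[0,1]\to[0,1]$, $(s,t)\mapsto s(1-t)$, induces through $\overline{(-)}$ (which preserves products) the contracting homotopy. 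Deformation retract inclusions are stable under coproduct and pushout because $(-)\times F_0(\Delta^1)$ is cocontinuous ({\bf Cond} is cartesian closed), and each such inclusion is monic (a point includes into $\overline{[0,1]}$ as a monomorphism), so each stage $B_\alpha\to B_{\alpha+1}$ of a cell presentation is a monic homotopy equivalence. Since $G_0$ preserves monomorphisms (a right adjoint) and the homotopy relation (Corollary \ref{CorPath} turns a homotopy $C\to D^{F_0(\Delta^1)}$ into $G_0(C)\to G_0(D)^{\Delta^1}$), each $G_0(B_\alpha\to B_{\alpha+1})$ is an acyclic cofibration in {\bf sSet}; and $G_0$ preserves filtered colimits --- this is precisely the computation in the proof of Lemma \ref{LemSmall}, where via Theorem \ref{main1} one has $G_0(X)_k = X(|\Delta^k|)$ and filtered colimits of sheaves on {\bf CH} are formed objectwise because all coverings are finite. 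Hence $G_0(A\to B)$ is a transfinite composite of acyclic cofibrations in {\bf sSet}, in particular a weak equivalence, which proves {\bf (2)} and, with {\bf (1)}, the existence of the model structure.

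It remains to upgrade the two adjunctions to Quillen equivalences. For $F_0\dashv G_0$: the counit $\epsilon_X$ is an acyclic fibration (Lemma \ref{LemCounit}) and $G_0$ reflects weak equivalences by definition of the model structure, so it suffices to check that the unit $\eta_A\colon A\to G_0F_0A$ is a weak equivalence for every simplicial set $A$. For finite $A$ one has $F_0A\cong\overline{|A|}$, because $\overline{(-)}$ carries a gluing of compact Hausdorff spaces along a closed subspace to the corresponding pushout of representable sheaves (the sheaf condition for the two-term cover), so $G_0F_0A\cong\mathrm{Sing}\,|A|$ and $\eta_A$ is the classical unit of $|{-}|\dashv\mathrm{Sing}$, a weak equivalence; the general case follows by writing $A$ as the filtered colimit of its finite subcomplexes, using that $F_0$ preserves all colimits and $G_0$ the filtered ones. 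For $F_1\dashv G_1$: the functor $G_1=\overline{(-)}$ sends (acyclic) Serre fibrations of spaces to (acyclic) fibrations of condensed sets, since by the Yoneda lemma (as $\overline{K}$ is represented by $K$) a lifting problem against $\overline{K}\hookrightarrow\overline{L}$ with $K\hookrightarrow L$ a topological generating (acyclic) cofibration is literally a topological lifting problem; thus $F_1\dashv G_1$ is a Quillen adjunction, and it is a Quillen equivalence because $G_0\circ G_1=\mathrm{Sing}$ (again by Yoneda) together with the equivalences induced by $\mathrm{Sing}$ and by $G_0$ forces the one induced by $G_1$ to be an equivalence.

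The main obstacle is claim {\bf (2)}: it is the only step carrying genuine homotopy-theoretic content --- the preparatory results Lemma \ref{LemProduct}, Corollary \ref{CorPath} and Lemma \ref{LemSmall} all feed into it --- whereas the model structure itself, the cofibration/fibration descriptions, and the Quillen adjunction part follow formally from the transfer machinery. Among the remaining points, the verification that the unit $\eta_A$ is a weak equivalence (equivalently, the identification of $F_0$ on finite simplicial sets with $\overline{|{-}|}$) is the next most delicate.
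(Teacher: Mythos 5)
Your proposal is correct and follows the paper's overall skeleton (transfer along $F_0\dashv G_0$, with claims \textbf{(1)} and \textbf{(2)} as the inputs), but the two substantive steps are argued by genuinely different routes. For claim \textbf{(2)} the paper shows that a relative $J$-cell complex has the HELP via a minimal-ordinal argument and then invokes Lemma \ref{LemCondHELP}; you instead note that each map of $J$ is a strong deformation retract inclusion, that such inclusions are stable under coproducts and pushouts because $(-)\times F_0(\Delta^1)$ preserves colimits, and that $G_0$ carries each stage to an acyclic cofibration of simplicial sets (it preserves monomorphisms as a right adjoint, homotopies by Corollary \ref{CorPath}, and the relevant filtered colimits by the computation in Lemma \ref{LemSmall}), so that $G_0(f)$ is a transfinite composite of acyclic cofibrations. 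This bypasses Section 3.3 entirely and replaces the paper's somewhat delicate ``least $\beta_0$'' contradiction with the standard closure properties of acyclic cofibrations; note that the paper's successor-ordinal step secretly uses the same pushout-of-SDR fact you make explicit, so your version arguably surfaces the real content. For the Quillen equivalences the paper deduces the derived unit of $F_0\dashv G_0$ from the classical equivalence $F_1F_0\dashv G_0G_1$ and the derived counit from Lemma \ref{LemCounit}, whereas you verify the unit $\eta_A$ directly via the identification $F_0A\cong\overline{|A|}$ for finite $A$ (which rests on the unproved-but-true fact that $\overline{(-)}$ sends pushouts of compact Hausdorff spaces along closed embeddings to pushouts of sheaves) and check by Yoneda that $G_1$ preserves (acyclic) fibrations; both routes land in the same two-out-of-three argument. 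One caveat, common to both your write-up and the paper: the transfer theorem produces cofibrations equal to \emph{retracts} of relative $I$-cell complexes, and your parenthetical that every such retract is itself a relative $I$-cell complex ``as in {\bf sSet}'' does not follow formally from the simplicial case; this gap is inherited from the statement of the corollary rather than introduced by you.
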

\begin{proof}
We will transfer the {\it cofibrantly generated} model structure on the category {\bf sSet} along the adjunction $F_0 \dashv G_0$.
To prove this, it is sufficient to prove the following two claims.
\begin{description}
\item[(1)] $I$ and $J$ permit the small object argument.
\item[(2)] Any relative $J$-cell complex is a weak homotopy equivalence.
\end{description}\par
The claim {\bf (1)} follows from Lemma \ref{LemSmall}.
We will show the claim {\bf (2)}.
Take any relative $J$-cell complex $f: X \rightarrow Y$.
Check that $f$ satisfies the HELP.
\[\xymatrix{
		\partial \Delta^n \ar[d] \ar[r]
		& X \ar[d]
	\\
		\Delta^n \ar[r]
		& Y
}\]
$f$ is the transfinite composition of a $\lambda$-sequence $Z$ such that each factor $Z_\beta \rightarrow Z_{\beta + 1}$ is the push-out of an element $j \in J$.
Let $\beta_0$ be the minimum ordinal of $\beta (\le \lambda)$ such that $Z_{\beta_0} \rightarrow Z_\lambda \cong Y$ has the HELP for the following diagram:
\[\xymatrix{
		\partial \Delta^n \ar[d] \ar[r]
		& X \ar[r]
		& Z_{\beta_0} \ar[d]
	\\
		\Delta^n \ar[rr]
		&& Y.
}\]
If we assume that $\beta_0$ is a limit ordinal, it contradicts that the functors ${\bf Cond}(\Delta^n, -)$ and ${\bf Cond}(\partial \Delta^n, -)$ preserve filtered colimits. 
If we assume that $\beta_0$ is a successor ordinal of an ordinal $\beta_0 - 1$, it contradicts that the morphism $Z_{\beta_0 - 1} \rightarrow Z_{\beta_0}$ is a strong deformation retract.
Then, we have the equality $\beta_0 = 0$.
Then, $f: X \cong Z_0 \rightarrow Y$ has the HELP.
This means that $f$ is a weak equivalence.
This proves the claim {\bf (2)}.
Then, the category {\bf Cond} has a model structure. \par
we will show that $F_0 \dashv G_0$ and $F_1 \dashv G_1$ are Quillen equivalences.
As is well known, $F_1F_0 \dashv G_0G_1$ is a Quillen equivalence.
It is only necessary to show that $F_0 \dashv G_0$ is a Quillen equivalence.
The unit of the derived adjunction of Quillen adjunction $F_0 \dashv G_0$ is a natural isomorphism because $F_1F_0 \dashv G_0G_1$ is a Quillen equivalence.
The counit of the derived adjunction of Quillen adjunction $F_0 \dashv G_0$ is a natural isomorphism because of Lemma \ref{LemCounit}.
Then, $F_0 \dashv G_0$ is a Quillen equivalence.
This implies that $F_1 \dashv G_1$ is also a Quillen equivalence.
\end{proof}

\bibliography{homotopy,category,condensed,h-principle,complex,misc}

\begin{thebibliography}{1}

\bibitem{clausen2022condensed}
Dustin Clausen and Peter Scholze.
\newblock Condensed mathematics and complex geometry.
\newblock {\em Lecture notes, Bonn--Copenhagen}, 2022.

\bibitem{gromov2013partial}
Misha Gromov.
\newblock {\em Partial differential relations}, volume~9.
\newblock Springer Science \& Business Media, 2013.

\bibitem{hovey2007model}
Mark Hovey.
\newblock {\em Model categories}.
\newblock Number~63. American Mathematical Soc., 2007.

\bibitem{mac2013categories}
Saunders Mac~Lane.
\newblock {\em Categories for the working mathematician}, volume~5.
\newblock Springer Science \& Business Media, 2013.

\bibitem{scholze2019lectures}
Peter Scholze.
\newblock Lectures on condensed mathematics.
\newblock {\em Notes available at https://www. math. unibonn.
  de/people/scholze/Condensed. pdf}, 2019.

\bibitem{cele100UMI800UP}
Peter Scholze.
\newblock Condensed mathematics.
\newblock Abstract of the celebrations for 100 years of the Unione Matematica
  Italiana and 800 years of the University of Padova, May 2022.

\bibitem{studer2020homotopy}
Luca Studer.
\newblock A homotopy theorem for oka theory.
\newblock {\em Mathematische Annalen}, 378(3):1533--1553, 2020.

\bibitem{vogt2010help}
Reiner~M Vogt.
\newblock The help-lemma and its converse in quillen model categories.
\newblock {\em arXiv preprint arXiv:1004.5249}, 2010.

\bibitem{yamazaki2021fibration}
Koji Yamazaki.
\newblock Fibration structure for gromov h-principle.
\newblock {\em arXiv preprint arXiv:2102.03449}, 2021.

\end{thebibliography}
\bibliographystyle{plain}


\end{document}